\def\<{\langle}
\def\>{\rangle}
\theoremstyle{plain}
\newtheorem{theorem}{Theorem}[section]
\newtheorem{lemma}[theorem]{Lemma}
\newtheorem{proposition}[theorem]{Proposition}
\theoremstyle{definition}
\theoremstyle{remark}
\newtheorem{remark}[theorem]{Remark}
\numberwithin{equation}{section}
\begin{document}
%%%%%%%%%%%%%%%%%%%%%%%%%%

\title[Recurrence of multiples of composition operators on weighted Dirichlet spaces]{Recurrence of multiples of composition operators on weighted Dirichlet spaces}

\author[N. Karim]{Noureddine Karim}

\author[O. Benchiheb]{Otmane Benchiheb}

\author[M. Amouch]{Mohamed Amouch}
\address[Noureddine Karim, Otmane Benchiheb, and Mohamed Amouch]{Chouaib Doukkali University.
Department of Mathematics, Faculty of science
El Jadida, Morocco}
\email{noureddinekarim1894@gmail.com}
\email{otmane.benchiheb@gmail.com}
\email{amouch.m@ucd.ac.ma}

\keywords{Hypercyclicity, Recurrence, Composition operator, Dirichlet spaces.}
\subjclass[2010]{
Primarily 47A16, %Cyclic vectors, hypercyclic and chaotic operators
    37B20,  	%Notions of recurrence and recurrent behavior in dynamical systems
%	%37A45  %	Relations between ergodic theory and number theory
%	%37A25  %	Ergodicity, mixing, rates of mixing
%		%11B25 % 	Arithmetic progressions
%		%47B37  	%Linear operators on special spaces (weighted shifts, operators on sequence spaces, etc.)
%	%47H60,  Multilinear and polynomial operators
%	%37F10, Complex dynamical systems - Polynomials; rational maps; entire and meromorphic functions
%	%30D20, %Functions of a complex variable, Entire functions, general theory
%	%30K99  % Functions of a complex variable -Universal holomorphic functions -None of the above, but in this section
Secondarily	46E50, %  	Spaces of differentiable or holomorphic functions on infinite-dimensional spaces
	46T25%    	Functional analysis	Holomorphic maps
%	%32H50.  %Several complex variables and analytic spaces	Iteration problems
%	%32A19 % 	Holomorphic functions of several complex variables  	 Normal families of functions, mappings
%%15A69 %Multilinear algebra, tensor products,
%%46G25% 	(Spaces of) multilinear mappings, polynomials
%%46G20% 	Infinite-dimensional holomorphy
%}
}
%\keywords{}

\date{} % uncomment to remove date from title

% ABSTRACT

\begin{abstract}
A bounded linear operator $T$ acting on a Hilbert space $\mathcal{H}$ is said to be recurrent if for every non-empty open subset $U\subset \mathcal{H}$ there is an integer $n$ such that $T^n (U)\cap U\neq\emptyset$. In this paper, we completely characterize the recurrence of scalar multiples of composition operators, induced by linear fractional self maps of the unit disk, acting on weighted Dirichlet spaces $S_\nu$; in particular on the Bergman space, the Hardy space, and the Dirichlet space. Consequently, we complete a previous work of Costakis et al. \cite{costakis} on recurrence of linear fractional composition operators on Hardy space. In this manner, we determine the triples $(\lambda,\nu,\phi)\in \mathbb{C}\times \mathbb{R}\times LFM(\mathbb{D})$ for which the scalar multiple of composition operator $\lambda C_\phi$ acting on $S_\nu$ fails to be recurrent.
\end{abstract}

%-----------------------------------
\renewcommand\thetable{\Roman{table}}

\maketitle
%%%%%%%%%%%%%%%%%%%%%%%%%%%%%%%%%%%%%%%%%%%%%%%%%%%%%%%%%%%%%%%%%%%%%%%%%%%%%%%%%%%%%%%%%%%%%%%%%%%%%%%%%
\section{Introduction and preliminaries}
Throughout this paper, $\mathbb{C}$ will represent the complex plane, $\mathbb{C}^*$ the punctured plane $\mathbb{C}\backslash\{0\}$, and $\hat{\mathbb{C}} = \mathbb{C}\cup \{\infty\}$ will be the one-point compactification of $\mathbb{C}$. Moreover, $\mathbb{D}$ will stand for the open unit disk of $\mathbb{C}$, while $\mathbb{T}$ will represent the unit circle of $\mathbb{C}$.

A bounded linear operator $T$ acting on a Hilbert space $\mathcal{H}$ is said to be hypercyclic if there is a vector $f\in \mathcal{H}$ whose $T$-orbit;
$$\mathcal{O}(f,T)=\{T^n f;\ n\in \mathbb{N}\},$$
is dense in $\mathcal{H}$. In such a case the vector $f$ is called a hypercyclic vector.
An operator $T$ is said to be cyclic if there is a vector $f\in \mathcal{H}$ such that the space generated by $\mathcal{O}(f,T)$;
$$\textrm{span}(\mathcal{O}(f,T))=\{p(T)f;\ p\ \textrm{a polynomial}\},$$
is dense in $\mathcal{H}$.
In this case the vector $f$ is called a cyclic vector.
A strong form of cyclicity and weaker than hypercyclicity is supercyclicity.
An operator $T$ is said to be supercyclic if there is a vector $f\in \mathcal{H}$ whose projective orbit;
$$\mathbb{C}.\mathcal{O}(f,T)=\{\lambda T^n f;\ n\in \mathbb{N}\ \textrm{and}\ \lambda\in \mathbb{C} \},$$
is dense in $\mathcal{H}$.
The vector $f$ is called a supercyclic vector.

%The fact that the closed linear span of the orbit under an operator $T$ of a vector $f$ is the smallest closed $T$-invariant subspace that contains the vector $f$ implies that the concept of cyclicity is strongly connected with the invariants subspaces problem. In the sense that $T$ lacks nontrivial invariant closed subspaces if and only if every non-zero vector $f$ is cyclic for $T$. Similarly, hypercyclicity has the same link with the invariant subsets problem.

Recall \cite{universal}, that an operator $T$ on a Hilbert space $\mathcal{H}$ is hypercyclic if and only if it is topologically transitive; that is, for any pair of non-empty open subsets $U$, $V$ of
$\mathcal{H}$ there exists some $n \in \mathbb{N}$ such that
$$T^n (U)\cap V\neq\emptyset.$$
Recently, the notions of hypercyclic and transitivity has been generalized and studied see \cite{recpro,uppfre,strtra,onthespe}.\\
Another important concept in topological dynamics is that of recurrence. This notion has been initiated by Poincar\'{e} and Birkhoff, while a systematic study was given by Costakis et al. in \cite{costakis}. A bounded linear operator on a Hilbert space $\mathcal{H}$ is called recurrent if for every non-empty open subset $U\subset \mathcal{H}$ there is a positive integer $n$ such that
$$T^n (U)\cap U\neq\emptyset.$$
A vector $f\in \mathcal{H}$ is said to be recurrent for $T$ if there exists a strictly increasing sequence of positive integers $(n_k)_{k\in \mathbb{N}}$ such that
$$T^{n_k}f\rightarrow f,\\ \mbox{ as } \ \ k\rightarrow \infty.$$
In that sense, every hypercyclic operator is recurrent, and every hypercyclic vector is recurrent.
For more information on linear dynamics we refer to \cite{lincha} and \cite{dynoflinope}.

The study of linear dynamics has become a very active area
of research. This work will be devoted to studying the recurrence of composition operators.
Recall that if $\mathcal{H}$ is a Hilbert space of analytic functions in the unit disk $\mathbb{D}$, and if $\phi$ is a nonconstant self map of $\mathbb{D}$,
then the composition operator $C_\phi$ associated to $\phi$ on $\mathcal{H}$ is defined by
$$C_\phi f=f\circ \phi \ \ \mbox{ for all } \ \ f \in \mathcal{H}.$$
In which case, the function $\phi$ called a symbol of $C_\phi$. For general references on the theory of composition operators, see, e.g., Cowen and MacCluer's book \cite{CM}, Shapiro's book \cite{Sh} and K. H. Zhu's book \cite{spaholfun}. The special about the composition operator is that the properties of $C_\phi$ depend significantly on the behaviour of the symbol $\phi$. In this paper, we show that the recurrence of the composition operator is influenced by the location of the fixed points of its symbol.

%The relation between cyclic operators and the invariants subspace problem, which still open in Hilbert spaces, has attracted the interest of many analysts in the last decades. In the case of composition operators, Nordgren, Radjavi, and Rosenthal \cite{NNR} have shown that the general invariant subspace problem is equivalent to a problem about the invariant subspaces of a composition operator induced by a hyperbolic automorphism of the unit disk that is, induced by a particular linear fractional transformation (see also \cite{NRW}). Thus, one could see the study of the dynamics of composition operators only induced by this class of analytic functions, which are the linear fractional transformations, as enough goal. Moreover, the knowledge of any property of composition operators induced by linear fractional transformation can be transferred according to the Linear Fractional Model Theorem (\cite[Theorem 0.4]{BS2}). For example, Shapiro in \cite[Chapter 9]{SH1} used the Linear Fractional Model Theorem to study the compactness of general composition operators.
For each real number $\nu$ the weighted Dirichlet space $\mathcal{S}_\nu$ is the space of functions $f(z)=\sum_{n=0}^{\infty}a_n z^n$ analytic on $\mathbb{D}$ such that the following norm
\begin{equation}\label{21}
\|f\|_\nu^2=\sum_{n=0}^{\infty}|a_n|^2(n+1)^{2\nu}
\end{equation}
is finite. Endowed with the inner product
$$\left\langle  \sum_{n=0}^{\infty}a_n z^n,\sum_{n=0}^{\infty}b_n z^n\right\rangle =\sum_{n=0}^{\infty}a_n \bar{b}_n(n+1)^{2\nu},$$
the spaces $\mathcal{S}_\nu$ are Hilbert spaces, see \cite[p. 16]{CM} or \cite[p. 1]{EM}.
%Observe that the condition (\ref{21}) on
%the Taylor coefficients of the function $f$ implies that $\limsup_{n\rightarrow\infty}|a_n|^{1/n}\leq1$
%and, thus, each $f \in \mathcal{S}_\nu$ is analytic on the unit disk $\mathbb{D}$ of the complex
%plane.
For some values of $\nu$ the spaces $\mathcal{S}_\nu$ are very well known classical analytic
function spaces: for instance if $\nu = 1/2$, $S_\nu$ is the Dirichlet space $\mathcal{D}$; for $\nu = 0$ it is the
Hardy space $H^2$ and for $\nu = -1/2$ it is the Bergman space $\mathcal{A}^2$. Observe that if $\nu_1>\nu_2$, then the space $\mathcal{S}_{\nu_1}$ is strictly contained in $\mathcal{S}_{\nu_2}$, and that $\|f\|_{\nu_2}\leq\|f\|_{\nu_1}$, for every $f\in \mathcal{S}_{\nu_1}$.

Also, we can define the Dirichlet space as the collection of functions analytic on $\mathbb{D}$ whose first derivatives have square integrable modulus over $\mathbb{D}$. For $f\in \mathcal{D}$ the norm in $\mathcal{D}$ has the representation
$$\|f\|_{\mathcal{D}}^2=|f(0)|^2+\int_{\mathbb{D}}|f'(z)|^2dA(z),$$
where here $dA(z)$ is the Lebesgue area measure on $\mathbb{D}$ normalized to have unit mass. In the Hardy space $H^2$ there is also an integral representation of the norm. This representation is the following
$$\|f\|_{H^2}^2=\frac{1}{2\pi}\sup_{0<r<1}\int_{-\pi}^{\pi}|f(re^{i\theta})|^2d\theta.$$
The Dirichlet space and the Hardy space will play a central role in our study.

Let $\nu\in \mathbb{R}$, we define the function on $\mathbb{D}$ as: $$k(z)=\sum_{n=0}^{\infty}\frac{z^n}{(n+1)^{2\nu}},$$ which is analytic on $\mathbb{D}$. Then for each $w\in \mathbb{D}$, the reproducing kernel is defined by $$K_w(z)=k(\bar{w}z).$$ This is easily seen that $\|K_w\|^2=k(|w|^2)$ and for every $f(z)=\sum_{n=0}^{\infty}a_nz^n\in S_\nu$ we have that $$\langle f,K_w\rangle=\sum_{n=0}^{\infty}a_nw^n=f(w).$$

It is known according to a result of P.R. Hurst \cite{PH} that the composition operator $C_\phi$ is always bounded on $S_\nu$ when $\phi$ is a linear fractional map on $\mathbb{D}$.\\
Recall (see, e.g., \cite[Chapter 3]{Ah}, \cite[Chapter 0]{Sh}) that linear fractional maps are those maps of the form $$\phi(z) = \frac{az + b}{cz + d},$$ where $a,b,c$ and $d$ are complex numbers satisfying $ ad - bc\neq0$. They extend to the extended complex plane $\mathbb{\hat{C}}$ by defining $\phi(\infty)=a/c$, and $\phi(-d/c)=\infty$ if $c\neq0$, while $\phi(\infty)=\infty$ if $c=0$. The linear fractional maps can be classified according to their fixed point, which are at most two fixed points if $\phi$ is not the identity. Two linear fractional maps $\phi$ and $\psi$ are called conjugate if there is another linear fractional map $T$ such that $$\phi=T^{-1}\psi T.$$ It is called parabolic if $\phi$ has only one fixed point $\alpha$ or, equivalently, it is conjugate to a translation $\psi(z)=z+\tau$, were $\tau\neq0$. If $\phi$ has two distinct fixed points $\alpha$ and $\beta$, then $\phi$ is conjugate to a dilation $\psi(z)=\mu z$. In this case, the linear fractional map $\phi$ is called elliptic if $|\mu|=1$; hyperbolic if $\mu>0$ and loxodromic otherwise, see \cite{Ah} for more details. The chain role proves that the value of the derivative at the fixed point is preserved under conjugation. Therefore, the derivative of a parabolic linear fractional map at its fixed point is $1$, while the derivative of a hyperbolic one is strictly less than $1$ at its attractive fixed point and greater than $1$ at its repulsive point. We will note $LFM(\mathbb{D})$ to refer to the set of all such maps,
which are additionally self-maps of the unit disk $\mathbb{D}$. It is known that if $\phi\in LFM(\mathbb{D})$, then the derivative $\phi'(\eta)$ exists and is finite for every $\eta$ in the unit circle $\mathbb{T}$. The condition $\phi(\mathbb{D})\subset \mathbb{D}$ make some exigences on the location of the fixed points of $\phi$. We have that (see \cite[p. 5]{Sh})
\begin{enumerate}
  \item If $\phi$ is parabolic, then its fixed point $\eta$ is in $\mathbb{T}$ and it satisfies $\phi'(\eta)=1$.
  \item If $\phi$ is hyperbolic, the attractive fixed point is in $\bar{\mathbb{D}}$ and the other fixed point outside of $\mathbb{D}$ and both fixed points are on $\mathbb{T}$ if and only if $\phi$ is an automorphism of $\mathbb{D}$.
  \item If $\phi$ is loxodromic or elliptic, one fixed point is in $\mathbb{D}$ and the other fixed point outside of $\bar{\mathbb{D}}$. The elliptic ones are always automorphisms of $\mathbb{D}$. The fixed point in $\mathbb{D}$ for the loxodromic ones is attractive.
\end{enumerate}
%It is known that if $\phi$ is a holomorphic self-map of the unit disk, then the composition operator induced by $\phi$ is continuous on $H(\mathbb{D})$ the space of holomorphic functions on $\mathbb{D}$ endowed with the topology of uniform convergence on compact.
%However, known but less, that the composition operator is continuous on the Hardy space $H^2(\mathbb{D})$, as a consequence of Littlewood's Principle (e.g., see \cite[Chapter 1]{Sh} or \cite[Corollary 2.24]{CM}). However, if we consider $C_\phi f$, with $f\in \mathcal{S}_\nu$, it is still an open problem to determine which holomorphic self maps of the unit disk induce bounded composition operators on $\mathcal{S}_\nu$. Since in some of
%the Dirichlet spaces, there are some composition operators associated with specific holomorphic self maps of
%$\mathbb{D}$ which are not bounded (see \cite{CM}).
%
%In general, for $\nu\leq0$, the composition $C_\phi$ is bounded on $\mathcal{S}_\nu$ \cite[Chapter 3]{CM}. If $\nu>0$, we have a result of Hurst (see \cite[Theorem 9]{PH}) that guarantees the boundedness of composition operators induced by linear fractional maps.
%In Table I, and Table II, we collect several results given by Gallardo-Guti\'{e}rrez and Montes-Rodr\'{\i}guez \cite{EM}, about cyclicity, supercyclicity and hypercyclicity of linear fractional composition operator on $\mathcal{S}_\nu$, and hypercyclicity of their scalar multiples, respectively.

Concerning the dynamics of composition operator on Dirichlet spaces, many works have been realized:

In her dissertation \cite{Zo}, Nina Zorboska studied composition operators induced by non-elliptic disk automorphism and proved that they are all cyclic on the Hardy space. In addition, in \cite{zo3} she has obtained some results about cyclicity and hypercyclicity on the so-called smooth weighted Hardy spaces, which are the weighted Hardy spaces whose functions have continuous first derivatives on the boundary of the unit disk (basically weighted Hardy spaces strictly smaller than $\mathcal{S}_{3/2}$).

In \cite{BS1,BS2} Bourdon and Shapiro thoroughly characterized the cyclicity, the supercyclicity, and the hypercyclicity of linear fractional composition operators on the Hardy space $H^2(\mathbb{D})$ (see Table I). Moreover, in \cite{BS2} they gave a program of transferring the cyclic and hypercyclic properties of linear fractional composition operators to general composition operators on the Hardy space.

In \cite{EM} answering some open questions posed by Zorboska \cite{zo3},  Gallardo-Guti\'{e}rrez and Montes-Rodr\'{\i}guez obtained a complete characterization of the cyclic and hypercyclic composition operators induced by linear fractional maps on weighted Dirichlet spaces $\mathcal{S}_\nu$ (see Table II).

In the present work, we give a complete characterization of recurrence of scalar multiples of composition operator whose symbols are linear fractional maps, acting on weighted Dirichlet spaces $\mathcal{S}_\nu$; in particular, on the Bergman space $\mathcal{S}_{-1/2}$, the Hardy space $\mathcal{S}_0$ and the classical Dirichlet space $\mathcal{S}_{1/2}$. To do that, spectral technics will play a significant role.

The paper is organized as follows$:$
In the third section, we characterize the recurrence of linear fractional composition operator $C_\phi$ on $\mathcal{S}_\nu$ by providing a necessary and sufficient condition on $\phi$ and the real number $\nu$.

The fourth section is devoted to studying the recurrence of scalar multiples of linear fractional composition operators $\lambda C_\phi$ on the weighted Dirichlet spaces by providing a necessary and sufficient condition on $\phi$, the real $\nu$, and the complex scalar $\lambda$.

In tables I and II, we expose the classification of Gallardo-Guti\'{e}rrez and Montes-Rodr\'{\i}guez of the dynamics of composition operators and their scalar multiples on $\mathcal{S}_\nu$. In table III, we summarize our results about the recurrence of linear fractional composition operator on $\mathcal{S}_\nu$. Finally, in table IV, we give a summarize of the recurrence of scalar multiples of linear fractional composition operator on $\mathcal{S}_\nu$.
\begin{table}[h!]
\centering
\begin{tabular}{|p{3cm}|c|c|c|}
  \hline
  % after \\: \hline or \cline{col1-col2} \cline{col3-col4} ...
  Type of $\phi$ & Cyclic & Supercyclic & Hypercyclic \\
  \hline\hline
   Hyperbolic Automorphism & $\nu<1/2$ & $\nu<1/2$ & $\nu<1/2$ \\
   \hline
  Parabolic Automorphism & $\nu<1/2$ & $\nu<1/2$ & $\nu<1/2$ \\
  \hline
  Hyperbolic Non-Automorphism & Always & $\nu\leq1/2$ & $\nu<1/2$ \\
  \hline
   Parabolic Non-Automorphism& $\nu\leq3/2$ & Never & Never \\
   \hline
   Interior and Exterior& Always & Never & Never \\
   \hline
   Interior and Boundary& Never & Never & Never \\
   \hline
   Elliptic Irrational rotation& Always & Never & Never \\
   \hline
   Elliptic Rational rotation& Never & Never & Never \\
  \hline
\end{tabular}
\caption{Cyclic properties of $C_\phi$ on $\mathcal{S}_\nu$, $\phi$ linear fractional map.}
\end{table}

\begin{table}[h!]
\begin{tabular}{|p{3cm}|c|}
  \hline
  % after \\: \hline or \cline{col1-col2} \cline{col3-col4} ...
   Type of $\phi$& Hypercyclicity \\
   \hline\hline
   Hyperbolic Automorphism& $\nu<1/2$ and $\phi'(\eta)^{(1-2\nu)/2}<|\lambda|<\phi'(\eta)^{(2\nu-1)/2}$ \\
   \hline
   Parabolic automorphism& $\nu<1/2$ and $|\lambda|=1$ \\
   \hline
  Hyperbolic Non-Automorphism & $\nu\leq1/2$ and $|\lambda|>\phi'(\eta)^{(1-2\nu)/2}$ \\
  \hline
\end{tabular}
\caption{Cyclic properties of $\lambda C_\phi$ on $\mathcal{S}_\nu$ with $\phi$ linear fractional map and $\eta$ denotes the attractive
fixed point of $\phi$}
\end{table}
\newpage
Our goal in the next two sections will be completing the previous tables by adding the recurrence of $C_\phi$ and also the recurrence of $\lambda C_\phi$ on $\mathcal{S}_\nu$. All the results about the recurrence of composition operators and its scalar multiples on $\mathcal{S}_\nu$ spaces are summarized in Table III and Table IV below.
\begin{table}[h!]
\centering
\begin{tabular}{|p{3cm}|c|c|}
  \hline
  % after \\: \hline or \cline{col1-col2} \cline{col3-col4} ...
  Type of $\phi$ & Recurrence & Examples  \\
  \hline\hline
   Hyperbolic Automorphism & $\nu<1/2$ & $\frac{3z+1}{z+3}$  \\
   \hline
  Parabolic Automorphism & $\nu<1/2$ & $\frac{(1+i)z-1}{z+i-1}$  \\
  \hline
  Hyperbolic Non-Automorphism & $\nu<1/2$ & $\frac{1+z}{2}$  \\
  \hline
   Parabolic Non-Automorphism& Never & $\frac{1}{2-z}$ \\
   \hline
   Interior and Exterior& Never & $\frac{-z}{2+z}$\\
   \hline
   Interior and Boundary& Never & $\frac{z}{2-z}$  \\
   \hline
   Elliptic Irrational rotation& Always & $e^{i\sqrt{2}}z$  \\
   \hline
   Elliptic Rational rotation& Always & $e^{2i\pi/3}z$  \\
  \hline
\end{tabular}
\caption{Recurrence of $C_\phi$ on $\mathcal{S}_\nu$, $\phi$ linear fractional map.}
\end{table}

\begin{table}[h!]
\begin{tabular}{|p{3cm}|c|}
  \hline
  % after \\: \hline or \cline{col1-col2} \cline{col3-col4} ...
   Type of $\phi$& Recurrence \\
   \hline\hline
   Elliptic Automorphism& $\nu\in \mathbb{R}$ and $|\lambda|=1$\\
   \hline
   Hyperbolic Automorphism& $\nu<1/2$ and $\phi'(\eta)^{(1-2\nu)/2}<|\lambda|<\phi'(\eta)^{(2\nu-1)/2}$ \\
   \hline
   Parabolic automorphism& $\nu<1/2$ and $|\lambda|=1$ \\
   \hline
  Hyperbolic Non-Automorphism & $\nu\leq1/2$ and $|\lambda|>\phi'(\eta)^{(1-2\nu)/2}$ \\
  \hline
\end{tabular}
\caption{Recurrence of $\lambda C_\phi$ on $\mathcal{S}_\nu$, $\phi$ linear fractional map and $\eta$ the attractive
fixed point of $\phi$.}
\end{table}
For the cases not appearing in the Table IV, the operator $\lambda C_\phi$ cannot be recurrent, for any $\lambda\in \mathbb{C}$.
\section{Recurrence of $C_\phi$ on $\mathcal{S}_\nu$}
A useful tool in the study of any of the dynamic properties is the following proposition known as Comparison principle (see \cite[p. 111]{Sh} and \cite{salas}).
\begin{proposition}
Let $\mathcal{E}$ be a metric space and $\mathcal{F}$ be a subspace that it self a linear metric space with a stronger topology. Suppose that $T$ is a continuous linear transformation on $\mathcal{E}$ that maps
the smaller space $\mathcal{F}$ into itself and also continuous in the topology of this space. If $T$ is cyclic on $\mathcal{F}$, then it is also cyclic on $\mathcal{E}$ and has an $\mathcal{E}$-cyclic vector that belongs to $\mathcal{F}$. Furthermore, the same is true for supercyclic and hypercyclic operators.
\end{proposition}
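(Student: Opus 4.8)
The plan is to reduce the whole statement to one elementary topological fact concealed in the phrase ``stronger topology,'' namely that the inclusion $\iota\colon\mathcal{F}\hookrightarrow\mathcal{E}$ is continuous. Continuity of $\iota$ means that $\mathcal{F}$-convergence forces $\mathcal{E}$-convergence, and hence that for every subset $A\subset\mathcal{F}$ one has $\overline{A}^{\,\mathcal{F}}\subset\overline{A}^{\,\mathcal{E}}$, closures being taken in the respective topologies and $\mathcal{F}$ identified with its image in $\mathcal{E}$. I will combine this with the density of $\mathcal{F}$ in $\mathcal{E}$ (implicit in the hypotheses; see the last paragraph) to carry a cyclic vector from $\mathcal{F}$ to $\mathcal{E}$ unchanged.

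For the cyclic case, suppose $f\in\mathcal{F}$ is cyclic for $T$ on $\mathcal{F}$, so that $\operatorname{span}(\mathcal{O}(f,T))$ is $\mathcal{F}$-dense in $\mathcal{F}$. Since $T(\mathcal{F})\subset\mathcal{F}$, each $p(T)f$ already lies in $\mathcal{F}$, whence $\operatorname{span}(\mathcal{O}(f,T))\subset\mathcal{F}$ and the inclusion of closures above gives $\mathcal{F}\subset\overline{\operatorname{span}(\mathcal{O}(f,T))}^{\,\mathcal{E}}$. Taking $\mathcal{E}$-closures and using that $\mathcal{F}$ is $\mathcal{E}$-dense yields $\overline{\operatorname{span}(\mathcal{O}(f,T))}^{\,\mathcal{E}}=\mathcal{E}$, so the same vector $f$ is cyclic for $T$ on $\mathcal{E}$ while belonging to $\mathcal{F}$, which is exactly the assertion.

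The supercyclic and hypercyclic versions will follow by rerunning this chain of inclusions verbatim, replacing $\operatorname{span}(\mathcal{O}(f,T))$ by the projective orbit $\mathbb{C}.\mathcal{O}(f,T)$ and by the orbit $\mathcal{O}(f,T)$, respectively. In each instance the only ingredients are that the relevant orbit set lies inside $\mathcal{F}$ (again by $T(\mathcal{F})\subset\mathcal{F}$) and that its $\mathcal{F}$-density propagates to $\mathcal{E}$ through the continuity of $\iota$, precisely as before.

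I do not anticipate a genuine obstacle, as the argument is purely topological and, for the forward implication, never actually invokes the continuity of $T$ on $\mathcal{E}$ — that hypothesis only serves to make the statement coherent. The one point that must be handled with care is the density of $\mathcal{F}$ in $\mathcal{E}$: it has to figure among the hypotheses (it is the natural reading of a dense subspace carrying a stronger topology), since otherwise every orbit remains trapped in $\overline{\mathcal{F}}^{\,\mathcal{E}}$, and should this be a proper subset of $\mathcal{E}$ the conclusion would fail.
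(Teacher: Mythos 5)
Your argument is correct, and in fact the paper offers no proof of this proposition at all: it is quoted as a known result with a citation to Shapiro's book and to Salas, so the only meaningful comparison is with the standard literature argument, which your chain of inclusions $\overline{A}^{\,\mathcal{F}}\subset\overline{A}^{\,\mathcal{E}}$ followed by $\overline{\mathcal{F}}^{\,\mathcal{E}}=\mathcal{E}$ reproduces exactly. You are also right to flag the missing hypothesis: Shapiro's original formulation requires $\mathcal{F}$ to be \emph{dense} in $\mathcal{E}$, and the paper's restatement drops the word; without it the conclusion fails (take $\mathcal{F}$ a proper closed invariant subspace), so your insistence on adding density is not pedantry but a genuine correction to the statement as printed. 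The only cosmetic remark is that your observation that continuity of $T$ on $\mathcal{E}$ is never used is accurate for the implication proved here, though that hypothesis is what makes the transferred cyclic vector's $\mathcal{E}$-orbit well behaved and is retained in all standard statements.
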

\begin{remark}
Observe that the comparison principle is also true for recurrent operators. If $\nu_1>\nu_2$, we have that $\mathcal{S}_{\nu_1}$ is a dense subspace of the space $\mathcal{S}_{\nu_2}$. Thus we can apply the comparison principle in the sense that, if an operator $T$ acting on $\mathcal{S}_{\nu_1}$ is recurrent, then it is recurrent on $\mathcal{S}_{\nu_2}$.
\end{remark}
\begin{theorem}\label{thm31}
Let $\phi$ be a linear fractional map on $\mathbb{D}$ with an interior fixed point $p\in \mathbb{D}$. Then $C_\phi$ is recurrent on any of the $\mathcal{S}_\nu$ spaces if and only if $\phi$ is an elliptic automorphism of $\mathbb{D}$.
\end{theorem}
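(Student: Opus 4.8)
The plan is to split according to the multiplier $\mu:=\phi'(p)$ at the interior fixed point. Since $\phi\in LFM(\mathbb{D})$ is univalent we have $\mu\neq 0$, and the Schwarz--Pick lemma (after conjugating $p$ to the origin by a disk automorphism) gives $|\mu|\le 1$, with equality precisely when $\phi$ is an automorphism; an automorphism of $\mathbb{D}$ fixing an interior point is elliptic. Hence exactly two situations occur: either $\phi$ is an elliptic automorphism and $|\mu|=1$, or $\phi$ is not an automorphism, $p$ is the attractive Denjoy--Wolff point, $\phi^{[n]}\to p$ locally uniformly, and $0<|\mu|<1$. I would prove recurrence in the first case and non-recurrence in the second, uniformly in $\nu$.

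For the sufficiency, suppose $\phi$ is an elliptic automorphism. Conjugating by the disk automorphism $T$ carrying $p$ to $0$, the map $\psi=T\circ\phi\circ T^{-1}$ fixes $0$ and is an automorphism, hence a rotation $\psi(z)=\mu z$ with $|\mu|=1$. Because $T\in LFM(\mathbb{D})$, the operator $C_T$ is bounded and invertible on $\mathcal{S}_\nu$ (Hurst), and $C_\phi$ is similar to $C_\psi$; since recurrence is a similarity invariant it suffices to treat $C_\psi$. On the orthogonal basis of monomials one has $C_\psi z^n=\mu^n z^n$, so $C_\psi$ is a diagonal unitary on $\mathcal{S}_\nu$. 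Choosing $n_k\to\infty$ with $\mu^{n_k}\to 1$ (possible since $1$ is always a limit point of $\{\mu^n\}$, whether $\mu$ is a root of unity or not), one gets $C_\psi^{n_k}q\to q$ in $\|\cdot\|_\nu$ for every polynomial $q$, because only finitely many Fourier modes are involved. Thus every polynomial is a recurrent vector; as polynomials are dense, $C_\psi$ is recurrent.

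For the necessity I would argue by contradiction using a spectral obstruction, the ``spectral technics'' advertised in the introduction. First I would record the lemma: if a bounded operator $S$ on a Hilbert space is recurrent and $S^{\ast}x=\lambda x$ for some $x\neq 0$, then $|\lambda|=1$. This follows from the fact (from \cite{costakis}) that a recurrent operator has a dense set of recurrent vectors: for a recurrent $y$ with $S^{n_k}y\to y$ one has $\langle S^{n_k}y,x\rangle=\bar\lambda^{\,n_k}\langle y,x\rangle\to\langle y,x\rangle$, and choosing $y$ with $\langle y,x\rangle\neq 0$ (possible by density) forces $\bar\lambda^{\,n_k}\to 1$, whence $|\lambda|=1$. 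Next I would exhibit an eigenvalue of $C_\phi^{\ast}$ off the unit circle. The functional $f\mapsto f'(p)$ is bounded on $\mathcal{S}_\nu$, represented by a nonzero vector $\kappa_p\in\mathcal{S}_\nu$ (its coefficients decay geometrically since $|p|<1$). The chain rule gives, for every $f$, $\langle C_\phi f,\kappa_p\rangle=(f\circ\phi)'(p)=\phi'(p)\,f'(p)=\mu\,\langle f,\kappa_p\rangle$, so $C_\phi^{\ast}\kappa_p=\bar\mu\,\kappa_p$ with $0<|\bar\mu|=|\mu|<1$. The lemma then forbids recurrence of $C_\phi$, completing the non-automorphism case.

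The main obstacle is the necessity half, and specifically the spectral obstruction lemma: turning the purely topological definition of recurrence into the statement that $C_\phi^{\ast}$ has no eigenvalue of modulus different from $1$. The clean route is through the density of recurrent vectors for a recurrent operator established in \cite{costakis}; the remaining inputs (boundedness and non-vanishing of the derivative-evaluation functional, the adjoint action $C_\phi^{\ast}K_w=K_{\phi(w)}$ and its derivative version, and the similarity invariance of recurrence) are routine once the correct framework is in place.
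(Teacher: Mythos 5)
Your proposal is correct, and while the sufficiency half coincides with the paper's argument (conjugate the elliptic automorphism to a rotation $z\mapsto\mu z$, use similarity invariance of recurrence, and pick $n_k$ with $\mu^{n_k}\to1$ --- the paper checks convergence $C_{\phi_\mu}^{n_k}f\to f$ for \emph{every} $f\in\mathcal{S}_\nu$, which tacitly needs dominated convergence, whereas you check it only on the dense set of polynomials and invoke density of recurrent vectors; both are fine), your necessity half takes a genuinely different route. The paper argues directly from dynamics: since $\phi$ is not an elliptic automorphism, the Denjoy--Wolff theorem gives $\phi_n(z)\to p$ pointwise, and continuity of the point evaluations on $\mathcal{S}_\nu$ forces every recurrent vector $f$ to satisfy $f(z)=f(p)$ for all $z$, so the recurrent vectors are the constants and cannot be dense. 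You instead exhibit the derivative-evaluation vector $\kappa_p$ as an eigenvector of $C_\phi^{\ast}$ with eigenvalue $\bar\mu$, where Schwarz--Pick gives $0<|\mu|<1$ in the non-automorphism case, and then rule out recurrence via the adjoint-eigenvalue obstruction (this is exactly Proposition 2.15 of \cite{costakis}, which the paper itself invokes in later theorems, and your self-contained proof of it via $\bar\lambda^{\,n_k}\langle y,x\rangle\to\langle y,x\rangle$ is sound). The trade-off: the paper's argument is more elementary and extends verbatim to arbitrary holomorphic self-maps with interior Denjoy--Wolff point, needing only boundedness of point evaluations; yours requires the boundedness of $f\mapsto f'(p)$ on every $\mathcal{S}_\nu$ (true, by the geometric decay coming from $|p|<1$) but has the virtue of being uniform with the spectral machinery used throughout the rest of the paper. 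Both proofs are complete.
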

\begin{proof}
If $\phi$ is an elliptic automorphism then $\phi$ is conjugate to a rotation (see \cite[Chapter 0]{Sh}). Thus, there exists a linear fractional map composition operator $S$ and a complex number $\lambda\in \mathbb{T}$ such that $C_\phi=S^{-1}C_{\phi_{\lambda}}S$, where $\phi_{\lambda}(z)=\lambda z$, $z\in \mathbb{D}$. In order to prove that $C_\phi$ is recurrent, we need to prove that $C_{\phi_{\lambda}}$ is recurrent. Since $\lambda\in \mathbb{T}$, there exists $(n_k)_{k\in \mathbb{N}}$ such that $\lambda^{n_k}\rightarrow 1$. For any $f(z) = \sum_{m\geq0}a_mz^m \in \mathcal{S}_\nu$ we have that
$$\|C_{\phi_{\lambda}}^{n_k}(f)- f\|_{\mathcal{S}_\nu}=\sum_{m\geq0}|a_m|^2 |\lambda^{m n_k}-1|^2(m+1)^{2\nu}\rightarrow 0,\ \textrm{as}\ k\rightarrow\infty.$$
Thus, every $f\in \mathcal{S}_\nu$ is a recurrent vector for $C_{\phi_\lambda}$, and so $C_{\phi_\lambda}$ is recurrent on $\mathcal{S}_\nu$. Now, if $\phi$ is not an elliptic automorphism then the Denjoy-Wolff Iteration Theorem, \cite[Proposition 1, Chapter 5]{Sh},
implies that $\phi_n(z)$ converges to $p$, for every $z\in \mathbb{D}$. Hence, if $f$ is a recurrent function of $C_\phi$ on $\mathcal{S}_\nu$, then there exists a sequence of positive integers $(n_k)_{k\in \mathbb{N}}$, such that $f\circ\phi_{n_k}\rightarrow f$ in $\mathcal{S}_\nu$. Thus, by the continuity of point evaluation functional on $S_\nu$, we have that for each $z\in \mathbb{D}$: $$f(z)=\lim_{k\rightarrow\infty}f(\phi_{n_k}(z))=f(p).$$
We conclude that the only recurrent vectors of $C_\phi$ are the constant functions.
\end{proof}
\begin{theorem}
Let $\phi$ be a parabolic automorphism of the unit disk. Then $C_\phi$ is recurrent on $\mathcal{S}_\nu$ if and only if $\nu<1/2$.
\end{theorem}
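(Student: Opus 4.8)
The statement splits into sufficiency ($\nu<1/2$) and necessity ($\nu\ge 1/2$), and I would handle these very differently. For sufficiency the quickest route is to borrow hypercyclicity: the Gallardo-Guti\'errez--Montes-Rodr\'iguez classification recorded in Table~I says that a parabolic automorphism induces a hypercyclic $C_\phi$ on $\mathcal{S}_\nu$ exactly when $\nu<1/2$, and since every hypercyclic operator is recurrent (as recalled in the introduction), $C_\phi$ is recurrent for $\nu<1/2$. Thus all the real work lies in showing non-recurrence for $\nu\ge 1/2$, and I expect that argument to bifurcate at the Dirichlet threshold $\nu=1/2$.

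When $\nu>1/2$ the situation mirrors Theorem~\ref{thm31} exactly. Since $2\nu>1$, Cauchy--Schwarz gives $\sum_n|a_n|\le\big(\sum_n(n+1)^{2\nu}|a_n|^2\big)^{1/2}\big(\sum_n(n+1)^{-2\nu}\big)^{1/2}\le C_\nu\|f\|_\nu$, so every $f\in\mathcal{S}_\nu$ extends continuously to $\overline{\mathbb D}$ and point evaluation at the parabolic fixed point $\eta\in\mathbb T$ is a bounded functional. By the Denjoy--Wolff theorem the iterates satisfy $\phi_n(z)\to\eta$ for every $z\in\mathbb D$, so if $f\circ\phi_{n_k}\to f$ in $\mathcal S_\nu$ then, evaluating at a fixed $z$ and using continuity up to the boundary, $f(z)=\lim_k f(\phi_{n_k}(z))=f(\eta)$. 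Hence the only recurrent vectors are constants, they are not dense, and $C_\phi$ is not recurrent, just as in the interior-fixed-point case.

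The borderline $\nu=1/2$ is the crux, because on the Dirichlet space $\mathcal D$ boundary point evaluation is unbounded and the argument above collapses. Here I would exploit the conformal invariance of the Dirichlet integral. Modulo constants the map $f\mapsto f'$ is an isometry of $(\mathcal D/\mathbb C,\ \text{Dirichlet seminorm})$ onto the Bergman space $\mathcal A^2=\mathcal S_{-1/2}$ (one checks $\int_{\mathbb D}|f'|^2\,dA=\sum_{n\ge1}n|a_n|^2$, which matches the $\mathcal A^2$ norm of $f'$), and since the paper's $\mathcal S_{1/2}$ norm is equivalent to $|f(0)|^2+\int_{\mathbb D}|f'|^2\,dA$ this is legitimate for the topological notion of recurrence. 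Under $f\mapsto f'$ the operator $C_\phi$ is carried to the weighted composition operator $Wg=\phi'\cdot(g\circ\phi)$ on $\mathcal A^2$, which is a surjective isometry, hence a unitary, because $\int_{\mathbb D}|\phi'|^2|g\circ\phi|^2\,dA=\int_{\mathbb D}|g|^2\,dA$ for the automorphism $\phi$. I would then identify the spectral type of $W$ by conjugating through the Cayley transform to the upper half-plane, where $\phi$ becomes the translation $w\mapsto w+\tau$ with $\tau\in\mathbb R$ and $W$ becomes an honest translation operator; a Fourier/Laplace transform diagonalizes it as multiplication by the unimodular symbol $e^{i\tau\xi}$, so its spectrum is purely absolutely continuous and $W$ has no eigenvalues. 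For a unitary with purely absolutely continuous spectrum, each spectral measure $\mu_h$ ($h\neq0$) is absolutely continuous, so $\widehat{\mu_h}(n)\to0$ by Riemann--Lebesgue and no rigidity sequence exists, i.e.\ $\|W^{n}h-h\|\not\to0$ along any subsequence; thus $W$ admits no nonzero recurrent vector. Pulling back, any recurrent $f\in\mathcal D$ must satisfy $f'=0$, hence be constant, and $C_\phi$ is not recurrent on $\mathcal D$.

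The main obstacle is exactly this $\nu=1/2$ analysis: establishing rigorously that $W$ has absolutely continuous spectrum (equivalently, that its spectral measures carry no rigidity sequence) rather than relying on the heuristic Fourier picture, and then transferring ``no nonzero recurrent vector modulo constants'' faithfully back to non-recurrence of $C_\phi$ on the full space $\mathcal D$. Once the spectral type of $W$ is pinned down, the remaining steps are routine, and the three regimes $\nu<1/2$, $\nu=1/2$, $\nu>1/2$ assemble into the stated dichotomy.
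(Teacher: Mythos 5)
Your proposal is correct, and on the decisive case $\nu=1/2$ it takes a genuinely different (and more robust) route than the paper. The paper argues there by a direct norm computation: it asserts that for a recurrent vector $f$ one has $\|C_{\phi_{n_k}}f-f(1)\|_{\mathcal D}^2=|f(\phi_{n_k}(0))-f(1)|^2+\int_{\mathbb D}|f'(\phi_{n_k}(z))|^2|\phi_{n_k}'(z)|^2\,dA(z)\to0$ and concludes that recurrent vectors are constant; but since $\phi_{n_k}$ is an automorphism, the change of variables $w=\phi_{n_k}(z)$ shows the integral term equals $\int_{\mathbb D}|f'|^2\,dA$ for every $k$ (conformal invariance of the Dirichlet integral), so that display cannot tend to zero for nonconstant $f$ --- your instinct that this borderline case is the crux and needs a structurally different argument is well founded. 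Your replacement --- work modulo constants, conjugate $C_\phi$ via $f\mapsto f'$ to the unitary $Wg=\phi'\cdot(g\circ\phi)$ on $\mathcal A^2$, and rule out rigidity sequences because $\langle W^{n}h,h\rangle=\widehat{\mu_h}(-n)\to0$ by Riemann--Lebesgue when the spectral measure $\mu_h$ is absolutely continuous, which is incompatible with $W^{n_k}h\to h$ unless $h=0$ --- is sound, and the norm equivalence $\|f\|_{1/2}^2\asymp|f(0)|^2+\int_{\mathbb D}|f'|^2\,dA$ legitimately transfers the conclusion back to $C_\phi$ on $\mathcal D$. The gap you flag (purely absolutely continuous spectrum of $W$) is real but standard to close: in the half-plane model the parabolic automorphism is a translation, a Paley--Wiener-type representation turns $W$ into multiplication by a unimodular exponential on a weighted $L^2(0,\infty)$, and spectral measures are then pushforwards of absolutely continuous measures, hence absolutely continuous; this is essentially the spectral analysis carried out in \cite{EM}. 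Finally, your direct treatment of $\nu>1/2$ (absolutely summable Taylor coefficients, continuity up to $\overline{\mathbb D}$, Denjoy--Wolff) is correct and self-contained, whereas the paper reaches that range by the comparison principle from $\nu=1/2$; the sufficiency direction coincides with the paper's.
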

\begin{proof}
First, if $\nu<1/2$, then $C_\phi$ is hypercyclic, hence it is recurrent. Now we shall prove that this condition is necessary. Let us start by the case $\nu=1/2$. Since $\phi$ is parabolic, it has a fixed point on the unit circle. Without loss of generality we may assume this fixed point is $1$. Suppose that $C_\phi$ is recurrent, and let $f$ be a recurrent vector for $C_\phi$. Then there exists a sequence $(n_k)_{k\in \mathbb{N}}$ of positive integers such that $$C_{\phi_{n_k}}(f)\rightarrow f,$$ in $\mathcal{D}$. Since $\phi$ has no fixed point in $\mathbb{D}$, by Denjoy-Wolff Iteration Theorem, the fixed point $1$ is actually attractive. Thus, $\phi_{n_k}$ converge to $1$ uniformly on compact subsets of $\mathbb{D}$; in particular, $\phi_{n_k}(z)$ converge to $1$, for every $z\in \mathbb{D}$. Therefore we have that
$$\|C_{\phi_{n_k}}(f)-f(1)\|_{\mathcal{D}}^2=\|f\circ\phi_{n_k}-f(1)\|_{\mathcal{D}}^2
=|f\circ\phi_{n_k}(0)-f(1)|^2+
\int_{\mathbb{D}}|f'(\phi_{n_k}(z))|^2 |\phi_{n_k}'(z)|^2 dA(z)\rightarrow 0,$$
when $k\rightarrow \infty$. Thus, all recurrent vectors for $C_\phi$ are constants, and so $C_\phi$ cannot be recurrent in this case. Therefore, $C_\phi$ cannot be recurrent on $\mathcal{D}$. Hence, neither on any $S_\nu$ with $\nu>1/2$, by Comparison principle.
\end{proof}
\begin{theorem}
Let $\phi$ be a hyperbolic automorphism of the unit disk. Then $C_\phi$ is recurrent on $\mathcal{S}_\nu$ if and only if $\nu<1/2$.
\end{theorem}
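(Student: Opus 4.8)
The plan is to follow the same two-step scheme used for the parabolic automorphism: first settle sufficiency, then reduce necessity to the single critical space $\mathcal{D}=\mathcal{S}_{1/2}$ and argue there directly. For sufficiency, if $\nu<1/2$ then a hyperbolic automorphism induces a hypercyclic $C_\phi$ on $\mathcal{S}_\nu$ (Table I), and every hypercyclic operator is recurrent, so nothing is left to do. For necessity I would prove that $C_\phi$ fails to be recurrent on $\mathcal{D}$; by the comparison principle this propagates to every $\mathcal{S}_\nu$ with $\nu>1/2$, which is the contrapositive of the claim.

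The engine of the argument is the conformal invariance of the Dirichlet seminorm. Writing $\|f\|_{\mathcal{D}}^2=|f(0)|^2+\int_{\mathbb{D}}|f'|^2\,dA$ and $|f|_*^2:=\int_{\mathbb{D}}|f'|^2\,dA$, the substitution $w=\phi(z)$ with $\phi(\mathbb{D})=\mathbb{D}$ gives $|f\circ\phi|_*=|f|_*$; thus $C_\phi$ is a surjective isometry for $|\cdot|_*$, i.e. a unitary on the quotient $H_0:=\mathcal{D}/\mathbb{C}$. Since $\|\cdot\|_{\mathcal{D}}$-convergence forces $|\cdot|_*$-convergence, it is enough to show that $H_0$ carries no nonzero recurrent vector; then every recurrent $f$ for $C_\phi$ on $\mathcal{D}$ is constant, the constants are not dense, and $C_\phi$ cannot be recurrent. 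It is convenient to pass to derivatives: the map $f\mapsto f'$ identifies $H_0$ isometrically with the Bergman space $\mathcal{A}^2=\mathcal{S}_{-1/2}$ and carries $C_\phi$ to the weighted composition operator $W_\phi u=(u\circ\phi)\,\phi'$, which is an isometry of $\mathcal{A}^2$.

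The hard part is to prove $\langle W_{\phi_n}u,u\rangle_{\mathcal{A}^2}\to 0$ for each fixed $u$, since then, with $u=f'$,
\[
|C_{\phi_n}f-f|_*^2=2|f|_*^2-2\,\mathrm{Re}\,\langle W_{\phi_n}u,u\rangle_{\mathcal{A}^2}\longrightarrow 2|f|_*^2,
\]
which is bounded away from $0$ for nonconstant $f$, so no nonconstant function is recurrent. I would obtain this from weak convergence $W_{\phi_n}u\rightharpoonup 0$ in $\mathcal{A}^2$. As $W_{\phi_n}$ is isometric the orbit is norm-bounded, and the span of the Bergman reproducing kernels is dense, so it suffices to check the pointwise decay $(W_{\phi_n}u)(w)=u(\phi_n(w))\,\phi_n'(w)\to0$ for each $w\in\mathbb{D}$. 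Here two facts combine: the Schwarz--Pick identity for automorphisms, $|\phi_n'(w)|=(1-|\phi_n(w)|^2)/(1-|w|^2)$, and the boundary decay $(1-|z|^2)|u(z)|\to0$ as $|z|\to1$ valid for every $u\in\mathcal{A}^2$ (proved by approximating $u$ in norm by polynomials). Since the Denjoy--Wolff theorem gives $\phi_n(w)\to\eta\in\mathbb{T}$, we get $(1-|\phi_n(w)|^2)|u(\phi_n(w))|\to0$, whence $(W_{\phi_n}u)(w)\to0$.

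Putting this together, the recurrent vectors of $C_\phi$ on $\mathcal{D}=\mathcal{S}_{1/2}$ are exactly the constants; they are not dense, so $C_\phi$ is not recurrent there, and by the comparison principle it is not recurrent on any $\mathcal{S}_\nu$ with $\nu>1/2$. Combined with hypercyclicity (hence recurrence) for $\nu<1/2$, this yields the stated equivalence. The step most likely to need care is the weak-convergence claim: near the boundary the blow-up of $u$ and the decay of $\phi_n'$ are \emph{exactly} balanced by Schwarz--Pick, and it is the $\mathcal{A}^2$ growth estimate $(1-|z|^2)|u(z)|\to0$ that tips the balance to $0$.
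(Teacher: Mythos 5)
Your proposal is correct, and it takes a genuinely different --- and in fact more careful --- route than the paper. The paper disposes of the case $\nu\geq 1/2$ by saying it "follows exactly as in the case of the parabolic automorphism," where the argument is to claim that $\|C_{\phi_{n_k}}f-f(1)\|_{\mathcal D}^2\to 0$, forcing every recurrent vector to be constant. But for an automorphism the Dirichlet integral is conformally invariant, $\int_{\mathbb D}|f'(\phi_{n_k}(z))|^2|\phi_{n_k}'(z)|^2\,dA(z)=\int_{\mathbb D}|f'|^2\,dA$, so the displayed quantity does \emph{not} tend to $0$ for nonconstant $f$; the paper's limit is wrong as stated. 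Your argument turns this very invariance into the engine of the proof: $C_\phi$ acts unitarily on the seminormed quotient, and the asymptotic orthogonality $\langle W_{\phi_n}u,u\rangle_{\mathcal A^2}\to 0$ (obtained from boundedness of the orbit, density of the Bergman kernels, the Schwarz--Pick equality $|\phi_n'(w)|=(1-|\phi_n(w)|^2)/(1-|w|^2)$, the Denjoy--Wolff convergence $\phi_n(w)\to\eta\in\mathbb T$, and the little-o growth estimate $(1-|z|^2)|u(z)|\to 0$) gives $|C_{\phi_n}f-f|_*^2\to 2|f|_*^2>0$ for nonconstant $f$, which is exactly the quantitative lower bound the paper's approach lacks. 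The remaining steps (hypercyclicity for $\nu<1/2$ from Table I, reduction of $\nu>1/2$ to $\nu=1/2$ by the comparison principle, non-density of the constants) agree with the paper. In short: same global architecture, but your treatment of the critical Dirichlet case is a different argument that closes a real gap in the paper's proof; its only cost is the extra machinery of weak convergence in the Bergman space.
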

\begin{proof}
If $\nu<1/2$ then the operator $C_\phi$ is recurrent. For $\nu\geq1/2$, the non-recurrence of $C_\phi$ follows exactly as in the
case of the parabolic automorphism.
\end{proof}
Now in the sequel let $\mathcal{S}_\nu^{0}$ be the space of functions obtained from the space $\mathcal{S_\nu}$ by identifying functions that differ by a constant; that is $f\in \mathcal{S}_\nu^0$ if and only if there is $g\in \mathcal{S}_\nu$ such that $f-g$ is constant. Then the space $\mathcal{S}_\nu^{0}$ is invariant under the operator $C_\phi$, and so we can consider $\tilde{C}_\phi$ the restriction of $C_\phi$ on $\mathcal{S}_\nu^{0}$.
\begin{lemma}\label{lem36}
Let $\phi$ be an analytic self-map of the unit disk. If $C_\phi$ is recurrent
on $\mathcal{S}_{\nu}$, then so is $\tilde{C}_\phi$ on $\mathcal{S}_\nu^{0}$.
\end{lemma}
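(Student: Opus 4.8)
The plan is to view the passage from $\mathcal{S}_\nu$ to $\mathcal{S}_\nu^0$ as a quasi-conjugacy and to check that recurrence is inherited along it. Let $q\colon \mathcal{S}_\nu\to \mathcal{S}_\nu^0$ be the canonical map sending a function to its class modulo the constants. Since the constants form a one-dimensional, hence closed, subspace, $\mathcal{S}_\nu^0$ is again a Banach space and $q$ is linear, bounded and \emph{surjective}; in particular it has dense range. By the very definition of $\tilde{C}_\phi$ as the operator induced by $C_\phi$ on classes, $q$ intertwines the two operators,
$$q\circ C_\phi=\tilde{C}_\phi\circ q,$$
and therefore $q\circ C_\phi^{\,n}=\tilde{C}_\phi^{\,n}\circ q$ for every positive integer $n$.

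I would then transfer recurrence by a direct open-set argument. Let $V\subset\mathcal{S}_\nu^0$ be an arbitrary non-empty open set and set $U=q^{-1}(V)$. By continuity of $q$ the set $U$ is open, and it is non-empty because $q$ is onto: any point of $V$ equals $q(f)$ for some $f\in\mathcal{S}_\nu$, and such an $f$ lies in $U$. As $C_\phi$ is recurrent on $\mathcal{S}_\nu$, there are a positive integer $n$ and a vector $f\in U$ with $C_\phi^{\,n}f\in U$. Applying $q$ and using the intertwining relation gives simultaneously $q(f)\in V$ and $\tilde{C}_\phi^{\,n}\!\left(q(f)\right)=q\!\left(C_\phi^{\,n}f\right)\in V$, whence $\tilde{C}_\phi^{\,n}(V)\cap V\neq\emptyset$. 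Since $V$ was arbitrary, $\tilde{C}_\phi$ is recurrent on $\mathcal{S}_\nu^0$.

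This argument is soft, and I do not anticipate a genuine obstacle: the only two points needing care are precisely the structural facts that drive the transference, namely that $q$ has dense range (here guaranteed by surjectivity onto the quotient) and that $q$ truly intertwines $C_\phi$ with $\tilde{C}_\phi$ (which is how $\tilde{C}_\phi$ was defined). If one prefers to avoid quotients, the same computation runs verbatim in the concrete model obtained by identifying $\mathcal{S}_\nu^0$ with the closed subspace $\{f\in\mathcal{S}_\nu:\ f(0)=0\}$ and taking $q$ to be the bounded projection $f\mapsto f-f(0)$; equivalently, one may phrase the whole lemma as the statement that the image under $q$ of a recurrent vector for $C_\phi$ is a recurrent vector for $\tilde{C}_\phi$, so that surjectivity of $q$ carries a dense set of recurrent vectors from $\mathcal{S}_\nu$ onto $\mathcal{S}_\nu^0$.
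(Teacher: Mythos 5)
Your proof is correct and follows essentially the same route as the paper's: both transfer recurrence through the canonical quotient map $q$, a bounded surjection with dense range intertwining $C_\phi$ with $\tilde{C}_\phi$. The paper phrases the transference at the level of recurrent vectors, via the explicit contractivity estimate $\|\tilde{C}_{\phi_k}f-f\|_{\mathcal{S}_\nu^0}\le\|C_{\phi_k}f-f\|_{\mathcal{S}_\nu}$, rather than with open sets, but this is only a difference of presentation --- your closing remark that $q$ carries recurrent vectors to recurrent vectors and hence a dense set onto a dense set is exactly the paper's argument.
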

\begin{proof}
We have that for every $f\in \mathcal{S}_\nu$ and $k\in \mathbb{N}$,
\begin{align*}
\big\|C_{\phi_k}f-f\big\|_{S_\nu}&\geq\big\|C_{\phi_k}f-f+f(0)-f(\phi_k(0))\big\|_{S_\nu}\\
                                 &=\big\|\tilde{C}_{\phi_k}f-f\big\|_{\mathcal{S}_\nu^{0}}.
\end{align*}

Thus, $Rec(\tilde{C}_\phi)=Rec(C_\phi)\cap \mathcal{S}_\nu^{0}$.
Assume that $C_\phi$ is recurrent, then $Rec(C_\phi)$ is dense in $\mathcal{S}_\nu$, which implies that $Rec(\tilde{C}_\phi)$ is dense in $\mathcal{S}_\nu^{0}$.
\end{proof}
\begin{theorem}
Let $\phi$ be a linear fractional map on $\mathbb{D}$ without interior fixed point. If $\phi$ is hyperbolic non-automorphism, then $C_\phi$ is recurrent on $\mathcal{S}_\nu$ if and if $\nu<1/2$.
\end{theorem}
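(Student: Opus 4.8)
The plan is to handle the two implications separately: sufficiency is immediate from the hypercyclicity table, and essentially all of the work lies in the necessity. For the sufficiency, if $\nu<1/2$ then, by the classification recorded in Table~I, a hyperbolic non-automorphism induces a hypercyclic operator $C_\phi$ on $\mathcal S_\nu$; since every hypercyclic operator is recurrent, $C_\phi$ is recurrent on $\mathcal S_\nu$ for all $\nu<1/2$.

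For the necessity I would show that $C_\phi$ is not recurrent on $\mathcal D=\mathcal S_{1/2}$; the recurrence form of the Comparison principle (the Remark following the Comparison principle) then rules out recurrence on every $\mathcal S_\nu$ with $\nu>1/2$, giving the implication ``recurrent $\Rightarrow\nu<1/2$''. Since $\phi$ is a hyperbolic non-automorphism with no interior fixed point, its Denjoy--Wolff point $\eta$ lies on $\mathbb T$, the second (repulsive) fixed point $\beta$ lies outside $\overline{\mathbb D}$, and $\phi(\mathbb D)\subsetneq\mathbb D$. Put $\Omega_n:=\phi_n(\mathbb D)$; from $\phi(\mathbb D)\subseteq\mathbb D$ one gets the nesting $\Omega_{n+1}=\phi_n(\phi(\mathbb D))\subseteq\Omega_n$. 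The geometric heart of the argument is the claim $\mathrm{Area}(\Omega_n)\to0$. To see it, conjugate $\phi$ by a M\"obius map $\sigma$ sending $\eta\mapsto0$ and $\beta\mapsto\infty$, so that $\sigma\phi\sigma^{-1}(w)=\mu w$ with $\mu=\phi'(\eta)\in(0,1)$; because $\beta\notin\overline{\mathbb D}$, the set $G:=\sigma(\mathbb D)$ is a bounded disk, whence $\sigma(\Omega_n)=\mu^{\,n}G$ has Euclidean diameter $\mu^{\,n}\,\mathrm{diam}(G)\to0$. Transporting back through $\sigma^{-1}$, the disks $\Omega_n$ shrink to $\{\eta\}$ and their areas tend to $0$.

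With this established I would pass to $\mathcal S_{1/2}^{0}$, on which the Dirichlet semi-norm $D(g)^2=\int_{\mathbb D}|g'|^2\,dA$ is a norm. For every $g$, the change of variables $w=\phi_n(z)$ (legitimate because $\phi_n$ is conformal, with Jacobian $|\phi_n'(z)|^2$) yields
\[
D\big(\tilde C_{\phi_n}g\big)^2=\int_{\mathbb D}\big|g'(\phi_n(z))\big|^2\,|\phi_n'(z)|^2\,dA(z)=\int_{\Omega_n}|g'(w)|^2\,dA(w)\longrightarrow0
\]
as $n\to\infty$, by dominated convergence, since $\mathbf 1_{\Omega_n}|g'|^2\downarrow0$ almost everywhere and is dominated by $|g'|^2\in L^1(dA)$. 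Consequently $\tilde C_{\phi_n}g\to0$ in $\mathcal S_{1/2}^{0}$ for every $g$ and along every increasing sequence of exponents, so the only recurrent vector of $\tilde C_\phi$ is $0$ and $\tilde C_\phi$ is not recurrent on $\mathcal S_{1/2}^{0}$. By Lemma~\ref{lem36} (in contrapositive form) $C_\phi$ is not recurrent on $\mathcal D$, and the Comparison principle then completes the necessity.

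I expect the main obstacle to be the area estimate $\mathrm{Area}(\Omega_n)\to0$, that is, the fact that the iterated images collapse onto the boundary fixed point. This is precisely where the non-automorphism hypothesis is indispensable: for an automorphism one has $\Omega_n=\mathbb D$ for every $n$, the Dirichlet semi-norm is preserved under $\tilde C_\phi$, and the argument above breaks down. This is consistent with the automorphic cases, where non-recurrence at $\nu=1/2$ must be obtained by the separate boundary-value argument used in the preceding theorems rather than by shrinking of the images.
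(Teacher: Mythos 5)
Your proposal is correct, and it differs from the paper's proof in two substantive ways. For the necessity, the paper treats $\nu>1/2$ and $\nu=1/2$ by separate arguments: for $\nu>1/2$ it invokes the explicit spectrum of $C_\phi$ on $\mathcal{S}_\nu$ (from Hurst) and the fact that $\{\phi'(p)\}$ is then a component of $\sigma(C_\phi)$ not meeting $\mathbb{T}$, which by Proposition 2.11 of Costakis et al.\ forbids recurrence; for $\nu=1/2$ it passes to $\mathcal{D}_0$ as you do, but argues via a \emph{one-step} estimate, claiming $\|C_\phi\|_{\mathcal{D}_0}<1$ from the strict inequality $\int_{\phi(\mathbb{D})}|f'|^2\,dA<\int_{\mathbb{D}}|f'|^2\,dA$. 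You instead settle only the case $\nu=1/2$ and then dispose of all $\nu>1/2$ at once by the recurrence form of the comparison principle (exactly as the paper itself does in the parabolic automorphism theorem), and your $\nu=1/2$ argument is asymptotic rather than one-step: the iterated images $\Omega_n=\phi_n(\mathbb{D})$ shrink to the Denjoy--Wolff point, so $D(\tilde C_{\phi_n}g)^2=\int_{\Omega_n}|g'|^2\,dA\to0$ for every $g$, and the only recurrent vector of $\tilde C_\phi$ is $0$. Your route is more elementary and self-contained (no spectral computation, no external spectral criterion), and it is also more robust: the paper's assertion $\|C_\phi\|_{\mathcal{D}_0}<1$ does not literally follow from a per-function strict inequality (the supremum of ratios each less than $1$ could still equal $1$), whereas your argument, or alternatively the weaker combination ``$\|C_\phi\|_{\mathcal{D}_0}\le1$ together with $D(C_\phi f)<D(f)$ for $f\neq0$,'' closes that gap. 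What the paper's spectral treatment of $\nu>1/2$ buys in exchange is finer information (the actual spectral picture) and independence from the comparison remark. All the individual steps you use — the nesting $\Omega_{n+1}\subseteq\Omega_n$, the boundedness of $\sigma(\mathbb{D})$ because the repulsive fixed point lies outside $\overline{\mathbb{D}}$ (this is exactly where non-automorphy enters, as you note), the conformal change of variables, and the equivalence of the Dirichlet seminorm with the quotient norm on $\mathcal{S}_{1/2}^0$ — check out.
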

\begin{proof}
If $\nu<1/2$ then $C_\phi$ is hypercyclic, thus it is recurrent. Now, suppose that $\nu>1/2$. Since $\phi$ has no interior fixed point, $\phi$ must have an attractive fixed point $p\in \mathbb{T}$. Thus, $\phi$ is hyperbolic non-automorphism with attractive fixed point $p\in \mathbb{T}$. Hence, the spectrum of $C_\phi$ on $S_\nu$ is given by:
$$\sigma(C_\phi)=\{\alpha\in \mathbb{C};\ |\alpha|\leq \phi'(p)^{-\gamma}\}\cup \{\phi'(p)^{k};\ k=0,1,...\},$$
where $\phi'(p)$ is the angular derivation of $\phi$ on $p$ and $\gamma=(1-2\nu)/2$, see \cite{PH}. If $\nu>1/2$, then $\gamma<0$. In this case the singleton $\{\phi'(p)\}$ is a component of the spectrum which isn't intersect $\mathbb{T}$, and so by \cite[Proposition 2.11]{costakis}, $C_\phi$ cannot be recurrent. Now, suppose that $\nu=1/2$, that is, $\mathcal{S}_\nu$ is the Dirichlet space $\mathcal{D}$. If $C_\phi$ is recurrent on the Dirichlet space, then, by Lemma \ref{lem36}, so is $\tilde{C}_\phi$ on $\mathcal{D}_0:=\mathcal{S}_{1/2}^0$. Since $\phi$ is not an automorphism, $\phi(\mathbb{D})\varsubsetneq \mathbb{D}$, and the recurrence of $C_\phi$ implies that $\phi$ is injective, then there exists a non-empty open $U\varsubsetneq \mathbb{D}$, such that $\phi(\mathbb{D})\cap U=\emptyset$. Hence, on $\mathcal{D}_0$ we have
$$\int_\mathbb{D} |f'(\phi(z))|^2 |\phi'(z)|^2 dA(z)=\int_{\phi(\mathbb{D})}|f'(z)|^2 dA(z)\leq \int_{\phi(\mathbb{D})}|f'(z)|^2 dA(z)+\int_{U}|f'(z)|^2 dA(z)<\int_{\mathbb{D}}|f'(z)|^2 dA(z).$$
Thus, $\|C_\phi\|_{\mathcal{D}_0}<1$, and therefore, $\tilde{C}_\phi$ cannot be recurrent on $\mathcal{D}_0$.
\end{proof}
\begin{theorem}
Let $\phi$ be a parabolic non-automorphism of the unit disk. Then $C_\phi$ is never recurrent in any of the weighted Dirichlet
spaces $S_\nu$.
\end{theorem}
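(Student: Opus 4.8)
The plan is to prove that the only recurrent vectors of $C_\phi$ are the constants; since the constants form a one-dimensional, hence non-dense, subspace of $\mathcal{S}_\nu$, and since recurrence forces the set of recurrent vectors to be dense (as used in the proof of Lemma \ref{lem36}), this shows $C_\phi$ is never recurrent. Throughout I write $\eta\in\mathbb{T}$ for the unique boundary fixed point of $\phi$, with $\phi'(\eta)=1$, and I invoke the Denjoy-Wolff Iteration Theorem \cite[Chapter 5]{Sh} so that $\phi_n\to\eta$ uniformly on compacta. The crucial geometric input is that, $\phi$ being a non-automorphism, conjugating it to a translation $w\mapsto w+a$ with $\mathrm{Re}\,a>0$ (via a Cayley map carrying $\eta$ to $\infty$ and $\mathbb{D}$ to the right half-plane) exhibits the images $\phi_n(\mathbb{D})$ as horodiscs internally tangent to $\mathbb{T}$ at $\eta$ whose Euclidean diameters tend to $0$; thus $\phi_n(\mathbb{D})$ shrinks to the single point $\eta$. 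This loss of mass, absent for automorphisms, is what destroys recurrence.

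For $\nu\ge 1/2$ it suffices, by the Comparison principle, to treat the Dirichlet space $\mathcal{D}=\mathcal{S}_{1/2}$. Passing to the quotient $\mathcal{D}_0=\mathcal{S}_{1/2}^{0}$ and the induced operator $\tilde{C}_\phi$ (Lemma \ref{lem36}), the area-integral form of the seminorm gives $\|\tilde{C}_\phi^{\,n}f\|_{\mathcal{D}_0}^2=\int_{\phi_n(\mathbb{D})}|f'(w)|^2\,dA(w)$; since $f'\in L^2(\mathbb{D},dA)$ while the area of $\phi_n(\mathbb{D})$ tends to $0$, absolute continuity of the integral yields $\tilde{C}_\phi^{\,n}f\to 0$ for every $f$. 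An operator all of whose orbits converge to $0$ has no nonzero recurrent vector, so $\tilde{C}_\phi$, and hence $C_\phi$, is not recurrent on $\mathcal{D}$; the Comparison principle then rules out recurrence on every $\mathcal{S}_\nu$ with $\nu\ge 1/2$.

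The substantive case, which I expect to be the main obstacle, is $\nu<1/2$. Here $C_\phi$ is not power bounded (one checks $\|C_\phi^{\,n}\|\to\infty$ from the growth of the reproducing kernel along $\phi_n(0)\to\eta$), so no contraction or strong-convergence argument is available; moreover, by the spectral analysis of linear fractional composition operators (cf.\ \cite{PH}), the spectrum of $C_\phi$ on $\mathcal{S}_\nu$ is the spiral $\{e^{-ta}\colon t\ge 0\}\cup\{0\}$ running from $1$ to $0$. This set is connected and meets $\mathbb{T}$ only at the single point $1$ --- the eigenvalue of the constant functions --- so it has no connected component disjoint from $\mathbb{T}$, and the isolated-component criterion \cite[Proposition 2.11]{costakis} that sufficed in the previous theorems does not apply. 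To attack this regime I would work directly with the eigenfunctions: writing $\sigma$ for the Cayley map with $\sigma(\eta)=\infty$, the singular inner functions $S_t=\exp(-t\,\sigma)$ satisfy $C_\phi S_t=e^{-ta}S_t$ with $|e^{-ta}|=e^{-t\,\mathrm{Re}\,a}<1$ for $t>0$, and $S_t\in\mathcal{S}_\nu$ when $\nu<1/2$.

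The heart of the argument is then to promote this family to a functional model, establishing that $C_\phi$ is similar to the multiplication operator $M_h$, $h(t)=e^{-ta}$, on an $L^2$-space of the spiral. Granting this, non-recurrence is immediate: a multiplication operator $M_h$ on $L^2(\mu)$ with $|h|\neq 1$ on a set of positive measure is never recurrent (on any set where $|h|<1-\delta$ the iterates $h^{n}$ collapse to $0$, so no neighbourhood can return), and concretely, expanding a recurrent vector as $f=\int \hat f(t)\,S_t\,d\mu(t)$ and using $C_\phi^{\,n_k}f\to f$ forces $e^{-n_k t a}\hat f(t)\to\hat f(t)$ for a.e.\ $t$, whence $\hat f(t)=0$ for all $t>0$ and $f$ is constant. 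The principal difficulty is exactly the justification of this model --- the completeness of $\{S_t\}_{t\ge 0}$ and the boundedness of the associated expansion and synthesis maps --- which requires the fine spectral theory of parabolic non-automorphic composition operators rather than the soft arguments that sufficed in the automorphic and hyperbolic cases; this is where essentially all the work of the theorem is concentrated.
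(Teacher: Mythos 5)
Your proposal diverges from the paper's argument, which is a two-line spectral one: the paper quotes $\sigma(C_\phi)=\{e^{-\alpha t}:t\ge 0\}\cup\{0\}$ from \cite{rikka}, asserts that $\{0\}$ is a connected component of this set disjoint from $\mathbb{T}$, and invokes \cite[Proposition 2.11]{costakis}. Your objection to that route is well taken: since $\mathrm{Re}\,\alpha>0$ for a non-automorphism, the spiral $\{e^{-\alpha t}:t\ge 0\}$ accumulates at $0$, so the union with $\{0\}$ is a single connected set containing $1\in\mathbb{T}$, and the component criterion gives nothing. Your alternative for $\nu\ge 1/2$ is correct and complete: the sets $\phi_n(\mathbb{D})$ are nested horodiscs with empty intersection in $\mathbb{D}$, so $\|\tilde{C}_\phi^{\,n}f\|_{\mathcal{D}_0}^2=\int_{\phi_n(\mathbb{D})}|f'|^2\,dA\to 0$ by continuity from above of the finite measure $|f'|^2dA$; hence the only recurrent vector of $\tilde{C}_\phi$ is $0$, Lemma \ref{lem36} kills recurrence on $\mathcal{D}$, and the comparison principle handles $\nu>1/2$. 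This is more elementary and more robust than the spectral argument.

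The genuine gap is the case $\nu<1/2$, which you correctly identify as the substantive one and then do not prove. The entire argument there rests on the claim that $C_\phi$ is similar to the multiplication operator $h(t)=e^{-ta}$ on an $L^2$-space over the spiral, i.e.\ that the singular inner eigenfunctions $S_t=\exp(-t\sigma)$ admit a bounded, invertible synthesis map $\hat f\mapsto\int\hat f(t)S_t\,d\mu(t)$ onto $\mathcal{S}_\nu$. Nothing of the sort is established, and there is no reason to expect it: $C_\phi$ is not normal, the family $\{S_t\}_{t\ge0}$ is not known to be a Riesz-type continuous frame for $\mathcal{S}_\nu$, and indeed the non-supercyclicity proofs for parabolic non-automorphisms in \cite{EM} avoid any such model precisely because it is unavailable, working instead with an angle criterion applied to finitely many of the $S_t$. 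Writing a putative recurrent vector as $f=\int\hat f(t)S_t\,d\mu(t)$ therefore begs the question. Until the $\nu<1/2$ case is closed by some other means (for instance, an argument exploiting the eigenvector decomposition only on a dense subspace together with a quantitative version of Lemma \ref{lem42} and Proposition \ref{prop44}, in the spirit of the proof of Theorem \ref{thm41}), the proposal does not prove the theorem.
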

\begin{proof}
Since $\phi$ is parabolic non-automorphism, the spectrum of $C_\phi$ on $\mathcal{S}_\nu$ is given by (see \cite{rikka}) $$\sigma(C_\phi)=\{e^{-\alpha t};\ t\geq0\}\cup\{0\},$$ thus $\{0\}$ is a component of the spectrum which does not intersect the unit circle. Hence, by \cite[Proposition 2.11]{costakis}, the operator $C_\phi$ cannot be recurrent.
\end{proof}
\section{Recurrence of $\lambda C_\phi$ on $\mathcal{S}_\nu$}
\begin{theorem}
Let $\phi$ be a holomorphic self map of $\mathbb{D}$ with an interior fixed point. Then the operator $\lambda C_\phi$ is recurrent on $\mathcal{S}_\nu$ if and only if $\phi$ is an elliptic automorphism and $\lambda\in \mathbb{T}$.
\end{theorem}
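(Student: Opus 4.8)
The plan is to establish the two implications separately, keeping careful track of the scalar $\lambda$ throughout and reusing the machinery already developed for Theorem~\ref{thm31}.

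For sufficiency, suppose $\phi$ is an elliptic automorphism and $\lambda\in\mathbb{T}$. Since an elliptic automorphism is conjugate to a rotation $\phi_\mu(z)=\mu z$ with $\mu\in\mathbb{T}$, there is an invertible composition operator $S=C_\sigma$ with $\lambda C_\phi=S^{-1}(\lambda C_{\phi_\mu})S$. Recurrence is a similarity invariant: if $R^{n_k}g\to g$ then $(S^{-1}RS)^{n_k}(S^{-1}g)\to S^{-1}g$, and since $S^{-1}$ is a homeomorphism, $S^{-1}\mathrm{Rec}(R)\subseteq\mathrm{Rec}(S^{-1}RS)$ is dense whenever $\mathrm{Rec}(R)$ is. So it suffices to show $\lambda C_{\phi_\mu}$ is recurrent. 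For $f=\sum a_m z^m\in\mathcal{S}_\nu$ one has $(\lambda C_{\phi_\mu})^n f=\lambda^n\sum_m a_m\mu^{mn}z^m$, so I would produce $n_k\uparrow\infty$ with $\lambda^{n_k}\mu^{mn_k}\to1$ for every $m$. Viewing $(\lambda,\mu)$ as an element of the compact group $\mathbb{T}^2$, compactness gives a convergent subsequence of $(\lambda^n,\mu^n)_n$, and subtracting consecutive indices yields $n_k\uparrow\infty$ with $(\lambda^{n_k},\mu^{n_k})\to(1,1)$, whence $\lambda^{n_k}\mu^{mn_k}\to1$ for each fixed $m$. The summands $|a_m|^2|\lambda^{n_k}\mu^{mn_k}-1|^2(m+1)^{2\nu}$ are dominated by the summable sequence $4|a_m|^2(m+1)^{2\nu}$, so dominated convergence gives $\|(\lambda C_{\phi_\mu})^{n_k}f-f\|_\nu\to0$; thus every $f$ is recurrent.

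For necessity, assume $\lambda C_\phi$ is recurrent and let $p\in\mathbb{D}$ be the interior fixed point, so that every iterate satisfies $\phi_n(p)=p$ \emph{exactly}. First I would show $\lambda\in\mathbb{T}$. Because $\mathrm{Rec}(\lambda C_\phi)$ is dense and $\{f:f(p)=0\}$ is a proper closed hyperplane (point evaluation at $p$ is bounded and nonzero), there is a recurrent vector $f$ with $f(p)\neq0$; along its sequence $n_k\uparrow\infty$ we have $\lambda^{n_k}C_\phi^{n_k}f\to f$, and evaluating at $p$ (point evaluation is continuous) gives $\lambda^{n_k}f(\phi_{n_k}(p))=\lambda^{n_k}f(p)\to f(p)$, so $\lambda^{n_k}\to1$ and therefore $|\lambda|=1$. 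Next, suppose $\phi$ is \emph{not} an elliptic automorphism; then by the Denjoy--Wolff theorem \cite[Proposition~1, Chapter~5]{Sh} the iterates $\phi_n$ converge to $p$ locally uniformly. For an arbitrary recurrent vector $f$ with sequence $n_k$, the numbers $\lambda^{n_k}$ lie in the compact set $\mathbb{T}$, so some subsequence converges to $\omega\in\mathbb{T}$; passing to point evaluations gives $f(z)=\lim\lambda^{n_k}f(\phi_{n_k}(z))=\omega f(p)$ for every $z$, so $f$ is constant. Then $\mathrm{Rec}(\lambda C_\phi)$ would consist only of constants and could not be dense in the infinite-dimensional space $\mathcal{S}_\nu$, a contradiction; hence $\phi$ is an elliptic automorphism.

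I expect the main obstacle to be the necessity of $\lambda\in\mathbb{T}$, and the decisive idea there is to exploit that $p$ is fixed by \emph{every} iterate (so $\phi_{n_k}(p)=p$ with no appeal to convergence of the iterates), combined with the existence of a recurrent vector not vanishing at $p$; this is precisely what lets the argument run without case distinctions and excludes $\lambda=0$ automatically. Once $|\lambda|=1$ is in hand, the rest is routine: the boundedness of $(\lambda^{n_k})$ makes the limit passage in the non-elliptic case immediate, while on the elliptic side conjugation reduces everything to simultaneous recurrence of a point of the torus $\mathbb{T}^2$.
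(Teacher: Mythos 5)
Your proposal is correct, and while its overall skeleton (conjugation to a rotation for sufficiency, Denjoy--Wolff plus continuity of point evaluations for necessity) matches the paper's, it differs in two substantive places, and in both it is actually the more careful argument. For sufficiency, the paper simply invokes the recurrence of $C_\phi$ from Theorem~\ref{thm31} and asserts that $|\lambda|=1$ then gives recurrence of $\lambda C_\phi$; that inference is not valid in general, since one needs a \emph{single} sequence $(n_k)$ along which $\lambda^{n_k}\to 1$ and $C_{\phi_\mu}^{n_k}f\to f$ simultaneously. Your simultaneous-recurrence argument on the compact group $\mathbb{T}^2$ (extracting $n_k$ with $(\lambda^{n_k},\mu^{n_k})\to(1,1)$ and then applying dominated convergence termwise) supplies exactly the missing ingredient. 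For necessity, the paper first argues that non-ellipticity forces non-recurrence of $C_\phi$ and hence of $\lambda C_\phi$ (again not a valid implication for general $\lambda$, as multiplying a non-recurrent operator by a scalar can create recurrence), and then treats $|\lambda|\neq 1$ only jointly with $\phi$ non-elliptic; the case of an elliptic automorphism with $|\lambda|\neq 1$ is not explicitly covered. Your route --- first deducing $\lambda^{n_k}\to 1$, hence $|\lambda|=1$, from the exact relation $\phi_{n_k}(p)=p$ applied to a recurrent vector with $f(p)\neq 0$ (which exists because $\{f:f(p)=0\}$ is a proper closed hyperplane), and only then running the Denjoy--Wolff argument with a convergent subsequence of $(\lambda^{n_k})\subset\mathbb{T}$ --- handles all cases uniformly and avoids the paper's awkward ``$f(p)$ is not even defined'' step. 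In short, your proof is a valid and in fact tighter version of the intended argument.
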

\begin{proof}
Assume that $\phi$ is an elliptic automorphism and $|\lambda|=1$. Then $C_\phi$ is recurrent by Theorem \ref{thm31}, and since $|\lambda|=1$, then $\lambda C_\phi$ is recurrent. If $\phi$ is not an elliptic automorphism, then $C_\phi$ is not recurrent, which implies that $\lambda C_\phi$ is not recurrent as well. Now, if $|\lambda|\neq1$ and $\phi$ is not an elliptic automorphism. Let $p\in \mathbb{D}$ be the fixed point of $\phi$. Suppose that $\lambda C_{\phi}$ is recurrent and let $f$ be a recurrent vector. Then there exists a sequence $(n_k)_{k\in \mathbb{N}}$ of positive integers such that
$$\lambda^{n_k}f\circ\phi_{n_k}\rightarrow f,$$ in $\mathcal{S}_\nu$. Thus, $$f(p)=\lim_{k\rightarrow\infty}\lambda^{n_k}(C_{\phi_{n_k}}f)(p)=
\lim_{k}\lambda^{n_k}f(\phi_{n_k}(p)).$$ Now, observe that $f(\phi_{n_k}(p))\rightarrow f(p)$ as $k\rightarrow \infty$. Therefore,
if $|\lambda|<1$, then $f(p)=0$ for every $z\in \mathbb{D}$. If $|\lambda|>1$, then $f(p)$ is not even defined, unless $f(p)=0$. But $f(p)$ cannot be zero for every recurrent vector, because the set of recurrent vectors is a dense subset.
\end{proof}
\begin{theorem}\label{thm41}
Let $\phi$ be a hyperbolic non-automorphism and $\eta$ its boundary fixed point. Then $\lambda C_\phi$ is recurrent on $\mathcal{S}_\nu$ if and only if $|\lambda|>\phi'(\eta)^{(1-2\nu)/2}$ and $\nu\leq1/2$. In particular, we have: For the Dirichlet space ($\nu=1/2$) the operator $\lambda C_\phi$ is recurrent if and only if $|\lambda|>1$.
\end{theorem}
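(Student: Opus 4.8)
The plan is to prove both implications, exploiting that $\sigma(\lambda C_\phi)=\lambda\,\sigma(C_\phi)$ together with the explicit description of $\sigma(C_\phi)$ recalled above, and to isolate the critical value $|\lambda|=\phi'(\eta)^{(1-2\nu)/2}$ as the single place where the spectral picture is not decisive. Write $s=\phi'(\eta)\in(0,1)$ and $\gamma=(1-2\nu)/2$, so that $\sigma(C_\phi)=\{\,|\alpha|\le s^{-\gamma}\,\}\cup\{\,s^{k}:k\ge 0\,\}$ and hence $\sigma(\lambda C_\phi)$ is the disk $\{\,|\beta|\le |\lambda|s^{-\gamma}\,\}$ together with the isolated points $\lambda s^{k}$. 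For the sufficiency ($\nu\le 1/2$ and $|\lambda|>s^{\gamma}=\phi'(\eta)^{(1-2\nu)/2}$) I would simply invoke Table~II: under exactly these hypotheses $\lambda C_\phi$ is hypercyclic on $\mathcal{S}_\nu$, and every hypercyclic operator is recurrent. In particular for $\nu=1/2$ the threshold is $\phi'(\eta)^{0}=1$, which already gives the ``in particular'' statement in the recurrent direction.

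For the necessity I would argue by contraposition, the first two regimes being purely spectral. If $\nu>1/2$ then $\gamma<0$, so the central disk of $\sigma(\lambda C_\phi)$ has radius $|\lambda|s^{-\gamma}=|\lambda|s^{\nu-1/2}$ with $s^{\nu-1/2}<1$, while $\lambda=\lambda s^{0}$ is an isolated point, a component by itself. For $\lambda C_\phi$ to be recurrent, \cite[Proposition~2.11]{costakis} forces every component to meet $\mathbb{T}$: the isolated point gives $|\lambda|=1$, but then the disk has radius $s^{\nu-1/2}<1$ and misses $\mathbb{T}$, a contradiction; hence $\lambda C_\phi$ is never recurrent when $\nu>1/2$. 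If $\nu\le 1/2$ and $|\lambda|<\phi'(\eta)^{(1-2\nu)/2}$, then $s^{-\gamma}\ge 1$ absorbs all the points $\lambda s^{k}$ (since $|\lambda|s^{k}\le|\lambda|\le|\lambda|s^{-\gamma}$), so $\sigma(\lambda C_\phi)$ is the single disk of radius $|\lambda|s^{-\gamma}<s^{\gamma}s^{-\gamma}=1$; it lies strictly inside $\mathbb{D}$, and again \cite[Proposition~2.11]{costakis} rules out recurrence.

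The remaining, and genuinely delicate, case is the boundary $\nu\le 1/2$, $|\lambda|=\phi'(\eta)^{(1-2\nu)/2}$, where $\sigma(\lambda C_\phi)=\cl{\mathbb{D}}$ does meet $\mathbb{T}$ and the spectral criterion is silent. Here I would pass to the quotient $\mathcal{S}_\nu^{0}$: the estimate of Lemma~\ref{lem36} goes through for $\lambda C_\phi$ after inserting the scalar factor $\lambda^{k}$, so recurrence of $\lambda C_\phi$ on $\mathcal{S}_\nu$ forces recurrence of $\lambda\tilde C_\phi$ on $\mathcal{S}_\nu^{0}$. The aim is then to show that $\lambda\tilde C_\phi$ strictly decreases the norm of every nonzero vector, $\|\lambda\tilde C_\phi f\|_{\mathcal{S}_\nu^{0}}<\|f\|_{\mathcal{S}_\nu^{0}}$ for $f\neq 0$; the orbit norms are then strictly decreasing, so no nonzero vector is recurrent, contradicting density of the recurrent set. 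For $\nu=1/2$ this is exactly the Dirichlet computation already used: since $\phi$ is a non-automorphism, $\phi(\mathbb{D})$ omits a non-empty open set, whence $\|\tilde C_\phi f\|_{\mathcal{D}_0}^2=\int_{\phi(\mathbb{D})}|f'|^2\,dA<\int_{\mathbb{D}}|f'|^2\,dA=\|f\|_{\mathcal{D}_0}^2$, and multiplying by $|\lambda|=1$ changes nothing.

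The hard part will be this same strict-contraction statement when $\nu<1/2$, where the norm is given only through Taylor coefficients and the convenient area representation is unavailable. I expect to replace the geometric argument by a quantitative one built on the two asymptotics $\|K_w\|^2=k(|w|^2)\asymp (1-|w|^2)^{2\nu-1}$ as $|w|\to 1$ and the hyperbolic contraction rate $1-|\phi_n(w)|^2\asymp s^{\,n}$ coming from the Denjoy--Wolff point $\eta$; together these give $\|(\lambda C_\phi)^{*n}K_w\|=|\lambda|^{n}\|K_{\phi_n(w)}\|\asymp (|\lambda|s^{-\gamma})^{n}=1$ at the boundary, the analytic reflection of the spectral radius being exactly $1$. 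The task is to upgrade this borderline boundedness into genuine strict decay of orbits, equivalently to produce an equivalent norm (or a $\lambda C_\phi$-invariant subspace larger than the constants) on whose quotient $\lambda\tilde C_\phi$ is a pointwise strict contraction. This boundary analysis, rather than either the hypercyclicity input or the spectral-component criterion, is where the real work of the theorem lies.
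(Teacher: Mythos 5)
Your sufficiency argument (hypercyclicity from Table II), your spectral treatment of the regimes $\nu>1/2$ and $\nu\le 1/2$ with $|\lambda|<\phi'(\eta)^{(1-2\nu)/2}$, and your pointwise strict-contraction argument on $\mathcal{D}_0$ for $\nu=1/2$, $|\lambda|=1$ are all correct. The spectral route is in fact more uniform than the paper's, which for $\nu>1/2$ instead observes that $K_1\in\mathcal{S}_\nu$ and $\overline{\lambda}$ is an eigenvalue of $(\lambda C_\phi)^*$, then invokes \cite[Proposition 2.15]{costakis}, and which treats all of $|\lambda|\le 1$ at once on $\mathcal{D}_0$ when $\nu=1/2$. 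Your formulation of the Dirichlet step (strict decrease of each nonzero orbit, rather than the paper's claim $\|\lambda C_\phi\|_{\mathcal{D}_0}<1$) is also the more defensible one.

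The genuine gap is the case you yourself flag: $\nu<1/2$ with $|\lambda|=\phi'(\eta)^{(1-2\nu)/2}$. What you offer there is a programme, not a proof: the kernel asymptotics $\|(\lambda C_\phi)^{*n}K_w\|\asymp 1$ only re-derive that the spectral radius equals $1$, and the decisive step --- upgrading this borderline boundedness to the non-existence of a dense set of recurrent vectors --- is exactly what is missing; no invariant subspace or equivalent norm is produced, and on $\mathcal{S}_\nu$ with $\nu<1/2$ there is no area-integral formula to imitate the $\mathcal{D}_0$ argument. The paper closes this case (indeed all of $|\lambda|\le\phi'(\eta)^{(1-2\nu)/2}$, $\nu<1/2$) by a weak-orbit estimate against the fixed test vector $g(z)=z$: it computes $\langle\lambda C_\phi f,z\rangle=2^{2\nu}\lambda f'(\phi(0))\phi'(0)$ for the normalized symbol with $\phi(0)=1-\mu$, $\phi'(0)=\mu$, bounds it by $2^{2\nu}C/(2-\mu)$ via the growth estimate of Proposition \ref{prop44}, and concludes with Lemma \ref{lem42}. (Whether that mechanism is itself airtight is a separate question --- as written, the set in Lemma \ref{lem42} is the range of a bounded linear functional, hence $\{0\}$ or all of $\mathbb{C}$, and the constant in Proposition \ref{prop44} depends on $f$ --- but some quantitative estimate of this weak-orbit type is what your sketch would have to become.) As submitted, your proof establishes the theorem only off the critical circle $|\lambda|=\phi'(\eta)^{(1-2\nu)/2}$ when $\nu<1/2$.
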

To prove Theorem \ref{thm41}, we need the following lemma and proposition:
\begin{lemma}\label{lem42}
Let $T$ be a bounded operator on a separable Hilbert space $\mathcal{H}$. Suppose that for each non zero vector $g\in \mathcal{H}$ such that the set of complex numbers $$A:=\{\langle T f,g\rangle;\ f\in \mathcal{H}\}$$ is not dense in $\mathbb{C}$. Then $T$ is not recurrent.
\end{lemma}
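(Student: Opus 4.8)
The plan is to argue by contraposition straight from the open-set definition of recurrence. I read the hypothesis as asserting the existence of a single nonzero vector $g\in\mathcal{H}$ for which $A=\{\langle Tf,g\rangle:\ f\in\mathcal{H}\}$ fails to be dense in $\mathbb{C}$, and my goal is to produce one nonempty open set $V\subset\mathcal{H}$ with $T^n(V)\cap V=\emptyset$ for every positive integer $n$; by definition this already shows that $T$ is not recurrent. The object carrying all the information is the functional $\psi:\mathcal{H}\to\mathbb{C}$, $\psi(x)=\langle x,g\rangle$, which is continuous and, because $g\neq 0$, surjective onto $\mathbb{C}$. The point to keep in mind is that $A=\psi(\operatorname{ran}T)$ is precisely the image under $\psi$ of the range of $T$. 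In fact, since $\psi$ is linear, $A$ is a $\mathbb{C}$-subspace of $\mathbb{C}$ and hence equals either $\{0\}$ or $\mathbb{C}$, so non-density forces $A=\{0\}$, i.e. $g\perp\operatorname{ran}T$, i.e. $g\in\ker T^{\ast}$; conceptually the lemma just says that a recurrent operator must have dense range. I will nonetheless give the direct open-set argument, which does not even need this reformulation.

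Since $A$ is not dense, I would fix an open disk $\Delta=\{w:\ |w-w_0|<\delta\}\subset\mathbb{C}$ with $\Delta\cap A=\emptyset$, and set $V=\psi^{-1}(\Delta)=\{x\in\mathcal{H}:\ \langle x,g\rangle\in\Delta\}$. Continuity of $\psi$ makes $V$ open, and $V$ is nonempty because the vector $x_0=(w_0/\|g\|^2)\,g$ satisfies $\psi(x_0)=w_0\in\Delta$. Thus $V$ is a legitimate test set for the recurrence criterion, and the whole burden of the proof is to check that no positive power of $T$ sends a point of $V$ back into $V$.

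The crucial observation is that for every $n\geq 1$ and every $x\in\mathcal{H}$ one has $T^n x=T(T^{n-1}x)\in\operatorname{ran}T$, so that $\psi(T^n x)=\langle T^n x,g\rangle\in A$. Consequently, if $x\in V$ then $\psi(T^n x)\in A$ lies outside $\Delta$, whence $T^n x\notin V$; this gives $T^n(V)\cap V=\emptyset$ for all $n\geq 1$, which is exactly what is required. I do not expect a genuine obstacle: the content is elementary once one notices that positive powers of $T$ land in $\operatorname{ran}T$. The only two points demanding care are interpretive rather than mathematical — one must read the hypothesis as the existence of a single witnessing vector $g$, and one must keep the index restricted to $n\geq 1$, since the case $n=0$ is excluded from the definition of recurrence and is precisely where the estimate $\psi(T^n x)\in A$ would otherwise break down.
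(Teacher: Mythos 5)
Your proof is correct, and it rests on the same structural observation as the paper's: $A=\psi(\operatorname{ran}T)$ for the functional $\psi(x)=\langle x,g\rangle$, and recurrence forces this image to be dense. The difference is in how that force is exerted. The paper argues by contradiction at one remove: it invokes the known fact that a recurrent operator has dense range, and then uses continuity of $\psi$ to get $\mathbb{C}=\psi(\mathcal{H})=\psi(\overline{\operatorname{Im}T})\subset\overline{\psi(\operatorname{Im}T)}=\overline{A}$. You instead verify non-recurrence directly from the open-set definition, exhibiting the explicit witness $V=\psi^{-1}(\Delta)$ for a disk $\Delta$ missing $A$, checking $V$ is open and nonempty, and noting that $T^n x\in\operatorname{ran}T$ for $n\geq 1$ forces $T^n(V)\cap V=\emptyset$. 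Your route is more self-contained (it does not presuppose the dense-range property of recurrent operators — in effect it reproves it), and your aside that $A$ is a linear subspace of $\mathbb{C}$, hence $\{0\}$ or $\mathbb{C}$, so that the hypothesis really says $g\in\ker T^{\ast}$, is a cleaner reading of what the lemma asserts than the paper's phrasing. The paper's version is shorter but leans on an external fact; both are sound, and you correctly handle the two delicate points (a single witnessing $g$ suffices, and the exponent must be restricted to $n\geq 1$).
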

\begin{proof}
Every non zero vector $g\in \mathcal{H}$ defines a linear functional $\psi:f\rightarrow \langle f, g\rangle$ whose range is $\mathbb{C}$. Suppose that $T$ is recurrent, then its range is dense in $\mathcal{H}$. Thus, $$\mathbb{C}=\psi(\mathcal{H})=\psi(\overline{\textrm{Im}(T)})\subset \overline{\psi(\textrm{Im}(T))}=\overline{A},$$ which implies that the set $A$ is dense in $\mathbb{C}$, a contradiction.
\end{proof}
\begin{proposition}\cite[Proposition 2.16]{EM}\label{prop44}
If $\nu<1/2$, then for each $f(z)=\sum_{n=0}^{\infty}a_n z^n\in \mathcal{S}_\nu$ there is a constant $C>0$ such that $$|f'(z)|\leq\frac{C}{(1-|z|^2)^{(3-2\nu)/2}}\ (z\in \mathbb{D}).$$
\end{proposition}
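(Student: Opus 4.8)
The plan is to bound $f'(z)$ pointwise by a Cauchy--Schwarz argument that separates the $\mathcal{S}_\nu$-norm of $f$ from a purely scalar power series, and then to estimate that scalar series near the boundary. Writing $f'(z)=\sum_{n=1}^\infty n a_n z^{n-1}$, I would factor each term as
\[
n|a_n|\,|z|^{n-1}=\Big[\,|a_n|(n+1)^\nu\,\Big]\cdot\Big[\frac{n\,|z|^{n-1}}{(n+1)^\nu}\Big]
\]
and apply the Cauchy--Schwarz inequality. Summed in $\ell^2$, the first bracket is at most $\|f\|_\nu$ by \eqref{21} (we only drop the $n=0$ term), so
\[
|f'(z)|\le \|f\|_\nu\left(\sum_{n=1}^\infty \frac{n^2}{(n+1)^{2\nu}}\,|z|^{2(n-1)}\right)^{1/2}.
\]
This reduces the entire statement to showing that the remaining sum is $O\big((1-|z|^2)^{-(3-2\nu)}\big)$, with an implied constant depending only on $\nu$.

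For that estimate I would first discard the numerator by the crude bound $\dfrac{n^2}{(n+1)^{2\nu}}\le (n+1)^{2-2\nu}$, valid since $n\le n+1$, which gives
\[
\sum_{n=1}^\infty \frac{n^2}{(n+1)^{2\nu}}\,|z|^{2(n-1)}\le \sum_{n=1}^\infty (n+1)^{2-2\nu}\,|z|^{2(n-1)}.
\]
Setting $x=|z|^2\in[0,1)$ and $\beta=3-2\nu$, the hypothesis $\nu<1/2$ guarantees $\beta>2>0$, so the exponent $\beta-1=2-2\nu$ is positive and the series $\sum_m m^{\beta-1}x^m$ is comparable to the negative binomial series. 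Concretely, from $(1-x)^{-\beta}=\sum_{m=0}^\infty \frac{\Gamma(m+\beta)}{\Gamma(\beta)\,m!}x^m$ together with the Stirling asymptotic $\frac{\Gamma(m+\beta)}{\Gamma(\beta)\,m!}\sim \frac{m^{\beta-1}}{\Gamma(\beta)}$, one obtains a constant $C_\nu$ with $m^{\beta-1}\le C_\nu\,\frac{\Gamma(m+\beta)}{\Gamma(\beta)\,m!}$ for all $m\ge 1$, whence after a reindexing
\[
\sum_{n=1}^\infty (n+1)^{2-2\nu}\,|z|^{2(n-1)}\le \frac{C_\nu}{|z|^{2}}\,(1-|z|^2)^{-(3-2\nu)}.
\]
The factor $|z|^{-2}$ is only an artifact of the index shift and is disposed of by treating $|z|\le 1/2$ separately (there the left-hand sum is trivially bounded by a constant) and using $|z|^{-2}\le 4$ on $|z|\ge 1/2$.

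Combining the two displays and taking square roots yields
\[
|f'(z)|\le C\,(1-|z|^2)^{-(3-2\nu)/2}\qquad(z\in\mathbb{D}),
\]
with $C$ a constant multiple of $\|f\|_\nu$ depending only on $\nu$, which is exactly the asserted bound. The only genuinely delicate point is pinning down the sharp growth rate of $\sum_n n^{2-2\nu}x^n$ as $x\to 1^-$; the rest is bookkeeping. I would supply that rate by the comparison with the negative binomial generating function above (or, alternatively, by the integral comparison $\sum_n n^{\beta-1}x^n\lesssim \Gamma(\beta)\,(\log(1/x))^{-\beta}$ together with $\log(1/x)\ge 1-x$), and I would emphasize that positivity of $\beta=3-2\nu$ — which needs only $\nu<3/2$, and a fortiori holds for $\nu<1/2$ — is precisely what produces the blow-up exponent $(3-2\nu)/2$.
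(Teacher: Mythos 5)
Your proof is correct. Note, however, that the paper does not prove this proposition at all: it is quoted verbatim from Gallardo-Guti\'errez and Montes-Rodr\'{\i}guez \cite[Proposition 2.16]{EM}, so there is no internal argument to compare against; your write-up in effect supplies the standard proof of the cited result, namely Cauchy--Schwarz against the weighted norm \eqref{21} (equivalently, pairing $f'$ with the derivative of the reproducing kernel) followed by the negative-binomial growth estimate $\sum_m m^{\beta-1}x^m = O\left((1-x)^{-\beta}\right)$ with $\beta=3-2\nu$. The only blemish is bookkeeping: with $x=|z|^2$, the reindexing $m=n+1$ produces the factor $x^{-2}=|z|^{-4}$, not $|z|^{-2}$ as you wrote; this is harmless, since you dispose of it exactly as you propose, by bounding the sum by a constant on $|z|\le 1/2$ and by $|z|^{-4}\le 16$ on $|z|\ge 1/2$. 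Your closing observation is also accurate and worth keeping: the argument needs only $\beta>0$, i.e.\ $\nu<3/2$, so the hypothesis $\nu<1/2$ is stronger than the estimate requires, and the constant $C$ may be taken to be $\|f\|_\nu$ times a constant depending only on $\nu$, consistent with the ``for each $f$ there is a constant'' phrasing of the statement.
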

%%%%%%%%%%%%%%%%%%%%%%%%%%%%%%%%%%%%%%%%%%%%%%%%%%%%%%%%%%%%%%%%%%%%%%%%%%%%%%%%%%%%%%%%%%%%%%%%%%%%%%%%%%%%%%%%%%%%%
Now we prove Theorem \ref{thm41}.
%%%%%%%%%%%%%%%%%%%%%%%%%%%%%%%%%%%%%%%%%%%%%%%%%%%%%%%%%%%%%%%%%%%%%%%%%%%%%%%%%%%%%%%%%%%%%%%%%%%%%%%%%%%%%%%%%%%%%
\begin{proof}[Proof of Theorem \ref{thm41}]
First, if $\nu\leq1/2$ and $|\lambda|>\phi'(\eta)^{(1-2\nu)/2}$, then $C_\phi$ is hypercyclic on $\mathcal{S}_\nu$, thus it is recurrent on $\mathcal{S}_\nu$. Now we prove that the conditions are necessary. Since the recurrence is invariant under similarity, we may suppose that the boundary fixed point is $1$.

Suppose that $\nu>1/2$. Then the reproducing kernel at $1$ $$K_1(z)=\sum_{n=0}^{\infty}\frac{z^n}{(n+1)^{2\nu}}$$ is in $\mathcal{S}_\nu$. Furthermore, for any $f\in \mathcal{S}_\nu$ we have $$\langle\overline{\lambda}C_\phi^* K_1,f\rangle=\overline{\lambda}\langle K_1,C_\phi f\rangle=\overline{\lambda}\langle K_1,f\circ\phi\rangle=\overline{\lambda}f(\phi(1))=\overline{\lambda}f(1)=
\langle\overline{\lambda}K_1,f\rangle.$$ Then $\overline{\lambda}$ is an eigenvalue of $\overline{\lambda}C_\phi^*$. Thus, if $|\lambda|\neq1$, the operator $\lambda C_\phi$ is not recurrent, since the point spectrum of its adjoint $\overline{\lambda}C_\phi^*$, $\sigma_p(\overline{\lambda}C_\phi^*)$ must be in the unit circle by \cite[Proposition 2.15]{costakis}. If $|\lambda|=1$, also in this case $\lambda C_\phi$ is not recurrent, since $C_\phi$ is not recurrent on $\mathcal{S}_\nu$.

Now, if $\nu=1/2$, in this case $\mathcal{S}_\nu$ will be the Dirichlet space $\mathcal{D}$. Assume that $\lambda C_\phi$ is recurrent on $\mathcal{D}$, for some $|\lambda|\leq1$, then so is $\lambda \tilde{C}_{\phi}$ on $\mathcal{D}_0$. But on $\mathcal{D}_0$ we have $$\int_\mathbb{D}|\lambda f'(\phi(z))|^2|\phi'(z)|^2 dA(z)=|\lambda|^2\int_{\phi(\mathbb{D})}|f'(z)|^2dA(z)<\int_\mathbb{D}|f'(z)|^2dA(z).$$
Thus $\|\lambda C_\phi\|_{D_0}<1$, and therefore, $\lambda \tilde{C}_\phi$ cannot be recurrent on $\mathcal{D}_0$, a contradiction.
It remains to prove that if $|\lambda|\leq\mu^{(1-2\nu)/2}$ and $\nu<1/2$, then the $\lambda C_\phi$ cannot be recurrent on $\mathcal{S}_\nu$. The growth estimate for the derivative in Proposition \ref{prop44} provides a constant $C$ such that the first inequality below holds. Thus we have that
\begin{align*}
|\langle \lambda C_\phi f,z \rangle| &= 2^{2\nu}|\lambda||(f\circ\phi)'(0)| \\
           &= 2^{2\nu}|\lambda||f'(\phi(0))||\phi'(0)| \\
           &= 2^{2\nu}|\lambda||f'(1-\mu)|\mu\\
           &\leq\frac{2^{2\nu}C|\lambda|\mu}{(1-|1-\mu|^2)^{(3-2\nu)/2}}\\
           &\leq\frac{2^{2\nu}C \mu^{(3-2\nu)/2}}{(2\mu-\mu^2)^{(3-2\nu)/2}}\\
           &=\frac{2^{2\nu}C}{2-\mu}.
\end{align*}
By applying Lemma \ref{lem42} we see that $\lambda C_\phi$ is not recurrent.
\end{proof}
\begin{theorem}
Let $\phi$ be a parabolic automorphism of the unit disk. Then the operator $\lambda C_\phi$ is recurrent on $\mathcal{S}_\nu$ if and only if $\nu<1/2$ and $|\lambda|=1$.
\end{theorem}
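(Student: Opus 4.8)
The plan is to prove both implications, handling the sufficiency by reduction to hypercyclicity and the necessity by separating the roles played by $|\lambda|$ and by $\nu$. For the sufficiency, suppose $\nu<1/2$ and $|\lambda|=1$. By the classification of Gallardo-Guti\'errez and Montes-Rodr\'iguez recorded in Table II, the operator $\lambda C_\phi$ is then hypercyclic on $\mathcal{S}_\nu$, and since every hypercyclic operator is recurrent, $\lambda C_\phi$ is recurrent. Thus the real content is the necessity: I must show that $\lambda C_\phi$ fails to be recurrent whenever $|\lambda|\neq1$ (for every $\nu$) or whenever $|\lambda|=1$ and $\nu\geq1/2$, since these two situations exhaust the complement of the stated condition.

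First I would dispatch the case $|\lambda|\neq1$ by a spectral argument that works for all $\nu$. Because $\phi$ is an automorphism, $C_\phi$ is invertible with $C_\phi^{-1}=C_{\phi^{-1}}$, and $\phi^{-1}$ is again a parabolic automorphism. Since every iterate $\phi_n$ is parabolic with derivative $1$ at the fixed point, the norms $\|C_\phi^n\|$ and $\|C_{\phi^{-1}}^n\|$ grow subexponentially on $\mathcal{S}_\nu$, so both operators have spectral radius $1$; equivalently, the spectrum of a parabolic-automorphism composition operator is the unit circle (the $\mu\to1$ degenerate limit of the annular spectrum in Hurst's computation \cite{PH}), whence $\sigma(C_\phi)\subseteq\mathbb{T}$. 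Consequently $\sigma(\lambda C_\phi)=\lambda\,\sigma(C_\phi)$ lies on the circle of radius $|\lambda|\neq1$, which is disjoint from $\mathbb{T}$. No connected component of the spectrum then meets $\mathbb{T}$, so $\lambda C_\phi$ is not recurrent by \cite[Proposition 2.11]{costakis}.

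It remains to treat $|\lambda|=1$ with $\nu\geq1/2$. By the comparison principle it suffices to rule out recurrence on the Dirichlet space $\mathcal{D}=\mathcal{S}_{1/2}$, as non-recurrence there propagates to every $\mathcal{S}_\nu$ with $\nu>1/2$. Adapting Lemma \ref{lem36} to $\lambda C_\phi$ (the same quotient-norm inequality holds after inserting the unimodular factor $\lambda^k$), I would pass to $\mathcal{D}_0$ and suppose, for contradiction, that $f$ is a recurrent vector, so that $\lambda^{n_k}\widetilde{f\circ\phi_{n_k}}\to\tilde f$ in $\mathcal{D}_0$ for some increasing $(n_k)$. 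Extracting a subsequence with $\lambda^{n_k}\to\omega\in\mathbb{T}$ yields $(f\circ\phi_{n_k})'=(f'\circ\phi_{n_k})\,\phi_{n_k}'\to\overline{\omega}f'$ in $L^2(\mathbb{D})$. Normalizing the boundary fixed point to $1$, Denjoy--Wolff gives $\phi_{n_k}(z)\to1$ for each $z$, and conjugating to the right half-plane, where $\phi$ becomes a vertical translation, shows $\phi_{n_k}'\to0$ locally uniformly. For a fixed compact $K\subset\mathbb{D}$ the change of variables on the injective map $\phi_{n_k}$ gives $\int_K|f'(\phi_{n_k})|^2|\phi_{n_k}'|^2\,dA=\int_{\phi_{n_k}(K)}|f'|^2\,dA$, and since $|\phi_{n_k}(K)|=\int_K|\phi_{n_k}'|^2\,dA\to0$, absolute continuity of the integral of the fixed $L^2$ function $|f'|^2$ forces this to tend to $0$. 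Comparing with the $L^2$-limit gives $\int_K|\overline{\omega}f'|^2\,dA=0$ for every $K$, hence $f'\equiv0$; every recurrent vector is therefore constant, i.e.\ zero in $\mathcal{D}_0$, contradicting the density of the recurrent vectors.

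The step I expect to require the most care is this last one, and it is the genuine crux: at $|\lambda|=1$ and $\nu=1/2$ one has $\sigma(\lambda C_\phi)=\mathbb{T}$ and $\lambda\tilde C_\phi$ is unitary on $\mathcal{D}_0$, so the spectral and adjoint-eigenvalue obstructions used elsewhere are unavailable and the non-recurrence must be read off the boundary dynamics. The subtlety is that $\phi_{n_k}(z)\to1$ tangentially along horocycles while $f'$ may blow up near $1$, so one cannot simply combine $\phi_{n_k}'\to0$ with pointwise control of $f'\circ\phi_{n_k}$; the mass-escape argument above circumvents this by using only the vanishing area $|\phi_{n_k}(K)|\to0$ together with the automorphism-invariance of the Dirichlet seminorm, which isolates the escaping contribution near the fixed point. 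Verifying the locally uniform compression $\phi_{n_k}'\to0$ via the half-plane model, and confirming the spectral identity $\sigma(C_\phi)=\mathbb{T}$ used in the previous case, are the two supporting computations I would carry out in full.
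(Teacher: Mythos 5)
Your argument is correct, but it takes a genuinely different route from the paper's proof of this theorem, and the difference is worth recording. The paper organizes the necessity by the value of $\nu$: for $\nu>1/2$ it uses the reproducing kernel $K_1\in\mathcal{S}_\nu$ to exhibit $\overline{\lambda}$ as an eigenvalue of $\overline{\lambda}C_\phi^{*}$ and invokes \cite[Proposition 2.15]{costakis} when $|\lambda|\neq1$, falling back on the non-recurrence of $C_\phi$ when $|\lambda|=1$; for $\nu<1/2$ and $|\lambda|<1$ it computes $\phi$ explicitly in the half-plane model, bounds $|\langle\lambda C_\phi f,z\rangle|$ via the derivative growth estimate of Proposition \ref{prop44}, concludes with Lemma \ref{lem42}, and then treats $|\lambda|>1$ by passing to the inverse $\lambda^{-1}C_{\phi^{-1}}$; the case $\nu=1/2$ is handled through $\mathcal{D}_0$. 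You organize the necessity by the value of $|\lambda|$ instead: the entire case $|\lambda|\neq1$ (all $\nu$) falls at once to the spectral localization $\sigma(C_\phi)=\mathbb{T}$ combined with \cite[Proposition 2.11]{costakis} --- for this spectral identity on every $\mathcal{S}_\nu$ you should simply cite \cite{rikka} (the reference the paper itself uses for spectra) rather than rely on the heuristic ``$\mu\to1$ limit'' of Hurst's annulus, though your subexponential-norm-growth justification via \cite{PH} also works --- and the case $|\lambda|=1$, $\nu\geq1/2$ is settled by your mass-escape argument on $\mathcal{D}_0$. That last argument is the genuinely new ingredient and is exactly right: since $\lambda\tilde{C}_\phi$ is unitary on $\mathcal{D}_0$, the global Dirichlet integral is conserved and no spectral or adjoint-eigenvalue obstruction is available, whereas localizing to a compact $K$ with $|\phi_{n_k}(K)|\to0$ shows the derivative's $L^2$-mass escapes to the Denjoy--Wolff point, so every recurrent vector is constant, contradicting density. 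What your route buys is uniformity and rigor at the boundary case: it dispenses with the explicit computation of $\phi$, with Proposition \ref{prop44}, and with the application of Lemma \ref{lem42} (which is delicate as stated, since the set $A$ there is the range of the bounded linear functional $f\mapsto\langle f,T^{*}g\rangle$ and hence is either $\{0\}$ or all of $\mathbb{C}$), and it supplies a complete argument for $\nu=1/2$, $|\lambda|\leq1$, which the paper's proof leaves essentially unfinished. What it costs is the external input $\sigma(C_\phi)=\mathbb{T}$, which is a citation, not a gap.
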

\begin{proof}
First if $\nu<1/2$ and $|\lambda|=1$, then $\lambda C_\phi$ is hypercyclic on $\mathcal{S}_\nu$, which implies that it is recurrent.
Now we prove that these conditions are necessary.
Let us first examine the case $\nu=1/2$. Suppose that $|\lambda|\leq1$.
Let $f$ be a recurrent vector for $\lambda C_\phi$, then there is a sequence $(n_k)_{k\in \mathbb{N}}$ of positive integers such that $\lambda^{n_k}C_\phi^{n_k}f\rightarrow f$ in $\mathcal{D}$.
Since $\phi$ is parabolic automorphism with fixed point $1$.
If $|\lambda|>1$, then, by what we have just proved, $\lambda^{-1}C_{\phi_{-1}}$ is not recurrent.
Now, an invertible operator is recurrent if and only if its inverse is.
Consequently, $\lambda C_\phi$ is not recurrent on $\mathcal{D}$.\\
Now suppose that $\nu>1/2$, we distingue two cases:\\
\textbf{Case 1}: If $|\lambda|\neq1$, the reproducing kernel at $1$ in $\mathcal{S}_\nu$ is an eigenvalue for the adjoint $\overline{\lambda}C_\phi^{*}$, a contradiction.\\
\textbf{Case 2}: If $|\lambda|=1$, we know that $C_\phi$ is not recurrent on $\mathcal{S}_\nu$, then $\lambda C_\phi$ is not recurrent on $\mathcal{S}_\nu$ as well.\\
Now suppose that $\nu<1/2$. By Proposition the fixed point of $\phi$ must be on the boundary of the unit disk. We may suppose that the fixed point is $1$. Let $$\sigma(w)=\frac{i(1+w)}{1-w},\ \textrm{and}\ \psi=\sigma\circ\phi\circ\sigma^{-1}.$$ then $\sigma$ is a linear fractional map of the unit disk onto the upper half plane that takes $1$ to $\infty$, which implies $\infty$ is the only fixed point of $\psi$, and so $\psi(w)=w+a$, where $a\neq0$ and $\Re a=0$. The fact that $\Re a=0$ comes form that $\phi$ corresponds to an automorphism of the upper half plane. Thus $\phi$ satisfies the following formula
\begin{equation}
\phi(z)=\frac{(2-a)z+a}{-az+2+a}\ \textrm{with}\ a\neq0\ \textrm{and}\ \Re a=0.
\end{equation}
If $|\lambda|<1$. By the growth estimate for the derivative's proposition, there is a constant $C$ such that the inequality below valid
\begin{align*}
|\langle\lambda C_\phi f,z\rangle|&=2^{2\nu}|\lambda||f'(\phi(0))||\phi'(0)|\\
&=2^{2\nu}|\lambda|\big|f'\big(\frac{a}{2+a}\big)\big|\frac{4}{4+|a|^2}\\
&\leq2^{2\nu+2}C|\lambda|\big(1-\frac{|a|^2}{4+|a|^2}\big)^{(2\nu-3)/2}\frac{1}{4+|a|^2}\\
&=\frac{2^{4\nu-1}C|\lambda|}{(4+|a|^2)^{(2\nu-1)/2}}\\
&<\frac{2^{4\nu-1}C}{(4+|a|^2)^{(2\nu-1)/2}}.
\end{align*}
Thus, by Lemma \ref{lem42}, the operator $\lambda C_\phi$ is not recurrent. If $|\lambda|>1$, then $\lambda^{-1}C_{\phi_{-1}}$ is not recurrent and, therefore, neither is $\lambda C_\phi$.
\end{proof}
\begin{theorem}
Let $\phi$ be a hyperbolic automorphism of the unit disk and $\eta$ its attractive fixed point. Then $\lambda C_\phi$ is recurrent if and only if $\nu<1/2$ and $\phi'(\eta)^{(1-2\nu)/2}<|\lambda|<\phi'(\eta)^{(2\nu-1)/2}$.
\end{theorem}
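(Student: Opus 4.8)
The plan is to read the stated region as exactly the hypercyclicity region of Table~II, so that sufficiency is immediate: if $\nu<1/2$ and $\phi'(\eta)^{(1-2\nu)/2}<|\lambda|<\phi'(\eta)^{(2\nu-1)/2}$, then $\lambda C_\phi$ is hypercyclic on $\mathcal{S}_\nu$, hence recurrent. All the work is in the converse, and I would organize it by the value of $\nu$. Throughout write $\mu=\phi'(\eta)\in(0,1)$ (the attractive fixed point $\eta$ lies on $\mathbb{T}$, as does the other fixed point, since $\phi$ is an automorphism) and $\gamma=(1-2\nu)/2$.

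For $\nu>1/2$ the reproducing kernel $K_\eta$ belongs to $\mathcal{S}_\nu$, because $\|K_\eta\|^2=k(1)=\sum_n (n+1)^{-2\nu}<\infty$. Since $\phi(\eta)=\eta$ one gets $C_\phi^{\ast} K_\eta=K_{\phi(\eta)}=K_\eta$, so $\overline{\lambda}$ is an eigenvalue of $(\lambda C_\phi)^{\ast}=\overline{\lambda}C_\phi^{\ast}$; when $|\lambda|\ne1$ this contradicts $\sigma_p((\lambda C_\phi)^{\ast})\subset\mathbb{T}$ from \cite[Proposition 2.15]{costakis}. When $|\lambda|=1$ I would use that $\nu>1/2$ forces $\sum_n|a_n|<\infty$, so every $f\in\mathcal{S}_\nu$ extends continuously to $\overline{\mathbb{D}}$: a recurrent vector $f$ satisfies $\lambda^{n_k}C_\phi^{n_k}f\to f$, and passing to a subsequence with $\lambda^{n_k}\to\omega\in\mathbb{T}$ gives $C_\phi^{n_k}f\to\overline{\omega}f$ uniformly, while $C_\phi^{n_k}f(z)=f(\phi_{n_k}(z))\to f(\eta)$ pointwise by the Denjoy--Wolff theorem; hence $\overline{\omega}f$ is the constant $f(\eta)$, contradicting density of the recurrent vectors. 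This disposes of every $\lambda$ when $\nu>1/2$.

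For $\nu<1/2$ with $|\lambda|$ outside the closed interval $[\mu^{\gamma},\mu^{-\gamma}]$ I would invoke the spectrum of the hyperbolic-automorphism composition operator on $\mathcal{S}_\nu$, the annulus $\sigma(C_\phi)=\{z:\mu^{\gamma}\le|z|\le\mu^{-\gamma}\}$ (see \cite{PH,EM}). By the spectral mapping theorem $\sigma(\lambda C_\phi)=\{z:|\lambda|\mu^{\gamma}\le|z|\le|\lambda|\mu^{-\gamma}\}$; if $|\lambda|>\mu^{-\gamma}$ this annulus lies entirely in $\{|z|>1\}$, and if $|\lambda|<\mu^{\gamma}$ entirely in $\{|z|<1\}$, so in either case it is a single connected component of the spectrum missing $\mathbb{T}$, and \cite[Proposition 2.11]{costakis} rules out recurrence. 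For $\nu=1/2$ with $|\lambda|\ne1$ I would pass to $\mathcal{D}_0=\mathcal{S}_{1/2}^0$, where conformal invariance of the Dirichlet integral makes $\tilde{C}_\phi$ a surjective isometry, i.e.\ unitary; then $\sigma(\lambda\tilde{C}_\phi)\subset\{|z|=|\lambda|\}$ misses $\mathbb{T}$, so $\lambda\tilde{C}_\phi$ is non-recurrent on $\mathcal{D}_0$, and Lemma~\ref{lem36} (in its $\lambda C_\phi$ form) transfers this to $\lambda C_\phi$ on $\mathcal{D}$.

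The hard part will be the borderline cases where the spectrum is tangent to, or sits on, the unit circle: $\nu<1/2$ with $|\lambda|\in\{\mu^{\gamma},\mu^{-\gamma}\}$, and $\nu=1/2$ with $|\lambda|=1$. Here \cite[Proposition 2.11]{costakis} is silent (the annulus, or circle, does meet $\mathbb{T}$), no eigenvalue of $(\lambda C_\phi)^{\ast}$ lies off $\mathbb{T}$, and the crude estimate $\|C_\phi^{n_k}f\|\asymp\mu^{\gamma n_k}$ forced on a recurrent vector by $\lambda^{n_k}C_\phi^{n_k}f\to f$ is consistent with $\|C_\phi^{-n_k}\|\asymp\mu^{-\gamma n_k}$, so norm bounds alone do not decide. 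The two boundaries for $\nu<1/2$ are interchanged by the identity $(\lambda C_\phi)^{-1}=\lambda^{-1}C_{\phi^{-1}}$, the invariance of recurrence under inversion, and the fact that $\phi^{-1}$ is again a hyperbolic automorphism whose attractive fixed point $\zeta$ carries the same multiplier $\mu$; so it suffices to treat one of them. To finish I would bring in the finer spectral structure: conjugating $\phi$ to a dilation on a half-plane exhibits $C_\phi$ (equivalently $\tilde{C}_\phi$ on $\mathcal{D}_0$) as unitarily equivalent to a weighted translation with purely absolutely continuous spectrum, whence $\langle(\lambda C_\phi)^nf,f\rangle\to0$ by a Riemann--Lebesgue argument; this contradicts $(\lambda C_\phi)^{n_k}f\to f$ for any $f\ne0$, so no nonzero recurrent vector exists. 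Establishing this absolute continuity, or an equivalent mixing estimate $\langle C_\phi^n f,f\rangle\to0$, is the technical heart of the argument.
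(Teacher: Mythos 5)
Your outline is fine away from the boundary, but it has a genuine gap exactly at the boundary values of $|\lambda|$, which is where the strict inequalities in the statement have to be earned. The sufficiency direction, the reproducing-kernel argument for $\nu>1/2$, and the annulus-spectrum argument for $\nu<1/2$ with $|\lambda|$ strictly outside $[\mu^{\gamma},\mu^{-\gamma}]$ are all sound. But the cases $|\lambda|=\mu^{\gamma}$, $|\lambda|=\mu^{-\gamma}$ for $\nu<1/2$, and $|\lambda|=1$ for $\nu=1/2$, are deferred to the claim that $C_\phi$ is unitarily equivalent to a weighted translation with ``purely absolutely continuous spectrum,'' from which you want $\langle(\lambda C_\phi)^n f,f\rangle\to0$. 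You yourself flag this as the technical heart, and it is not established. On $\mathcal{D}_0$ the operator $\tilde C_\phi$ is indeed unitary and a Riemann--Lebesgue argument is plausible there; but on $\mathcal{S}_\nu$ with $\nu<1/2$ the operator $\lambda C_\phi$ at $|\lambda|=\mu^{-\gamma}$ is not normal, ``absolutely continuous spectrum'' has no off-the-shelf meaning for it, and no decay of matrix coefficients is proved. As written, your converse only covers $|\lambda|$ strictly outside the closed interval, so the theorem is not obtained.

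The paper closes precisely these tangential cases by a different and more elementary device, and does not use the annulus spectrum at all: it writes $\phi$ explicitly as $\phi(z)=\frac{(1+\mu)z+1-\mu}{(1-\mu)z+1+\mu}$, applies the growth estimate $|f'(z)|\leq C\,(1-|z|^2)^{(2\nu-3)/2}$ of Proposition~\ref{prop44} to the quantity $\langle\lambda C_\phi f,z\rangle=2^{2\nu}\lambda\,(f\circ\phi)'(0)=2^{2\nu}\lambda\,f'\bigl(\tfrac{1-\mu}{1+\mu}\bigr)\tfrac{4\mu}{(1+\mu)^2}$, and obtains a bound proportional to $|\lambda|\mu^{(2\nu-1)/2}$; combined with Lemma~\ref{lem42} this excludes recurrence for all $|\lambda|\leq\mu^{(1-2\nu)/2}$, boundary value included, and the opposite range $|\lambda|\geq\mu^{(2\nu-1)/2}$ follows by passing to the inverse $\lambda^{-1}C_{\phi^{-1}}$, whose symbol is again a hyperbolic automorphism with the same multiplier $\mu$ at its attractive fixed point (the inversion trick you also identify). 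To complete your proof you must either carry out the absolute-continuity/mixing estimate in detail (at minimum on $\mathcal{D}_0$ for $\nu=1/2$, and in some substitute form for the non-normal case $\nu<1/2$), or replace that step by a pointwise derivative estimate of the above kind for $|\lambda|=\mu^{\pm\gamma}$.
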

\begin{proof}
For $\nu\geq1/2$, the non-recurrence of $\lambda C_\phi$ follows exactly as in the case of the parabolic automorphism.\\
Now, suppose that $\nu<1/2$. We need an expression of $\phi$. Without loss of generality, we may suppose that $\phi$ has $-1$ and $1$ as its fixed points. Moreover, we may assume that $1$ is the attractive fixed point. In order to compute $\phi$ explicitly, we use the change of variables $$\sigma(z)=\frac{i(1-z)}{1+z}$$ that sends the unit disk onto the upper half-plane, the fixed points $1$ and $-1$ to 0 and $\infty$, respectively, and $\phi$ to the contraction map $\phi(w)=\mu w$, where $0<\mu<1$. Coming back to the unit disk we have $$\phi(z)=\frac{(1+\mu)z+1-\mu}{(1-\mu)z+1+\mu}\ \textrm{with}\ 0<\mu<1.$$
Observe that the derivative at the attractive fixed point is $\phi'(1)=\mu$.\\
Now, for any $f\in \mathcal{S}_\nu$ we have the following estimate for some constant $C$,
\begin{align*}
|\langle \lambda C_\phi f,z\rangle|&=2^{2\nu}|\lambda||f'(\phi(0))||\phi'(0)|\\
&=2^{2\nu}|\lambda|\big|f'\big(\frac{1-\mu}{1+\mu}\big)\big|\frac{4\mu}{(1+\mu)^2}\\
&\frac{\leq2^{2\nu}C|\lambda|\mu^{(2\nu-1)/2}}{(1+\mu)^{2\nu-1}},
\end{align*}
that remain bounded for $|\lambda|\mu^{(2\nu-1)/2}\leq1$. Therefor, if $\lambda C_\phi$ is recurrent, then $|\lambda|>\mu^{(1-2\nu)/2}$. In addition, the inverse operator $\lambda^{-1}C_{\phi_{-1}}$ must also be recurrent. The attractive fixed point of $\phi_{-1}$ is $-1$ and $\phi'_{-1}(-1)=\mu$. Therefore, we must also have $|\lambda^{-1}|>\mu^{(1-2\nu)/2}$. Thus the conditions on $\lambda$ are necessary for $\lambda C_\phi$ to be recurrent.
\end{proof}
\begin{theorem}
Let $\phi$ be a parabolic non-automorphism of the unit disk. Then the operator $\lambda C_\phi$ is never recurrent on any $\mathcal{S}_\nu$.
\end{theorem}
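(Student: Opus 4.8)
The plan is to extend to arbitrary $\lambda$ the purely spectral argument already used for $C_\phi$ in the earlier theorem treating the case $\lambda=1$. Recall that for a parabolic non-automorphism the Denjoy--Wolff point $\eta$ lies on $\mathbb{T}$ with $\phi'(\eta)=1$, and that the spectrum of $C_\phi$ on every $\mathcal{S}_\nu$ is the spiral $\sigma(C_\phi)=\{e^{-\alpha t};\ t\geq0\}\cup\{0\}$ (from \cite{rikka}), in which $\{0\}$ is a component disjoint from the unit circle. The guiding idea is that multiplication by a scalar only rotates and rescales this picture, so that the component $\{0\}$, and hence the obstruction to recurrence, survives for every $\lambda$.

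First I would apply the spectral mapping theorem to the polynomial $p(z)=\lambda z$ to obtain $$\sigma(\lambda C_\phi)=\lambda\,\sigma(C_\phi)=\{\lambda e^{-\alpha t};\ t\geq0\}\cup\{0\};$$ note that $0\in\sigma(\lambda C_\phi)$ is in any case forced by the non-invertibility of $C_\phi$, since $\phi$ is not an automorphism and hence $\phi(\mathbb{D})\subsetneq\mathbb{D}$. Second, I would verify that $\{0\}$ remains a connected component of $\sigma(\lambda C_\phi)$ that avoids $\mathbb{T}$: for $\lambda\neq0$ the linear map $z\mapsto\lambda z$ preserves the separation of $0$ from the rest of the spectrum and keeps $0$ off $\mathbb{T}$, while for $\lambda=0$ the operator is the zero operator, whose spectrum is $\{0\}$, so the claim is trivial. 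Finally, \cite[Proposition 2.11]{costakis} applied to the component $\{0\}$ yields at once that $\lambda C_\phi$ is not recurrent, for every $\lambda\in\mathbb{C}$ and every $\nu\in\mathbb{R}$.

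The step I expect to be the main obstacle is making the regime $|\lambda|\geq1$ rigorous. When $|\lambda|<1$ the whole spectrum lies in the open unit disk and the conclusion is immediate; but when $|\lambda|\geq1$ the rescaled spiral $\{\lambda e^{-\alpha t}\}$ itself meets $\mathbb{T}$, so everything rests on genuinely isolating the component $\{0\}$ from the spiral part of the spectrum. As an independent check covering part of this regime, I would observe that the reproducing kernel $K_\eta$ belongs to $\mathcal{S}_\nu$ exactly when $\nu>1/2$, in which case $(\lambda C_\phi)^{*}=\overline{\lambda}C_\phi^{*}$ satisfies $(\lambda C_\phi)^{*}K_\eta=\overline{\lambda}K_\eta$, so that $\overline{\lambda}\in\sigma_p\big((\lambda C_\phi)^{*}\big)$; since the point spectrum of the adjoint of a recurrent operator must lie in $\mathbb{T}$ by \cite[Proposition 2.15]{costakis}, this rules out recurrence whenever $\nu>1/2$ and $|\lambda|\neq1$, corroborating the spectral computation and leaving only the spiral-into-origin structure to dispatch the borderline case $|\lambda|=1$.
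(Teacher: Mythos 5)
Your proposal follows the same route as the paper's own one-line proof: take the Gallardo-Guti\'errez--Schroderus description $\sigma(C_\phi)=\{e^{-\alpha t}:t\geq0\}\cup\{0\}$, pass to $\sigma(\lambda C_\phi)=\lambda\,\sigma(C_\phi)$, claim $\{0\}$ is a connected component missing $\mathbb{T}$, and invoke \cite[Proposition 2.11]{costakis}. The supplementary checks you add are sound and go slightly beyond the paper: for $|\lambda|<1$ the spectral radius of $\lambda C_\phi$ is $|\lambda|<1$, so the (unique) component of the spectrum misses $\mathbb{T}$ and in fact $\|(\lambda C_\phi)^n f\|\to0$, leaving no nonzero recurrent vectors; and for $\nu>1/2$ the identity $(\lambda C_\phi)^*K_\eta=\overline{\lambda}K_\eta$ together with \cite[Proposition 2.15]{costakis} correctly excludes $|\lambda|\neq1$.

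However, the step you yourself flag as the main obstacle is a genuine gap, and the proposal does not close it (nor does the paper, which simply asserts it). For a parabolic non-automorphism the translation parameter satisfies $\Re\alpha>0$, so $|e^{-\alpha t}|=e^{-t\Re\alpha}\to0$ as $t\to\infty$: the spiral $\{e^{-\alpha t}:t\geq0\}$ accumulates at the origin. Hence $\{e^{-\alpha t}:t\geq0\}\cup\{0\}$ is the closure of a connected set and is therefore connected, and $\{0\}$ is \emph{not} a connected component of $\sigma(\lambda C_\phi)$ for any $\lambda\neq0$; the spectrum has exactly one component. When $|\lambda|\geq1$ that component starts at the point $\lambda$ with $|\lambda|\geq1$ and spirals into $0$, so it meets $\mathbb{T}$ and Proposition 2.11 yields no obstruction at all. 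Consequently the cases $|\lambda|\geq1$ with $\nu\leq1/2$, and $|\lambda|=1$ with $\nu>1/2$, are not settled by the written argument; some mechanism other than the component criterion is required there (for instance, one based on the eigenfunctions $e^{it\sigma}$ of $C_\phi$ with eigenvalues of modulus strictly less than $1$, or an iteration argument at the Denjoy--Wolff point as in the automorphism cases). As it stands, the purely spectral route cannot prove the full statement.
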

\begin{proof}
Since the singleton $\{0\}$ is a component of the spectrum of $\lambda C_\phi$, but every component of the spectrum of recurrent operator must intersect the unit circle, it implies that $\lambda C_\phi$ is not recurrent.
\end{proof}


\begin{thebibliography}{9}
\bibitem{strtra}
Amouch, M., Karim, N. Strong transitivity of composition operators. Acta Math. Hungar. 164, 458–469 (2021).
\bibitem{onthespe}
S. Shkarin, On the spectrum of frequently hypercyclic operators, Proc. Amer. Math.
Soc., 137 (2009), 123-134.

\bibitem{uppfre}
A. Bonilla and K.-G. Grosse-Erdmann, Upper frequent hypercyclicity and related notions, Rev. Mat. Complut., 31 (2018), 673-711.

\bibitem{recpro}
J. Bes, Q. Menet, A. Peris and Y. Puig, Recurrence properties of hypercyclic operators, Math. Ann., 366 (2016), 545-5722.

\bibitem{dynoflinope}
F. Bayart and E. Matheron, Dynamics of linear operators, Cambridge University Press, Cambridge
2009.

\bibitem{lincha}
K.-G. Grosse-Erdmann and A. Peris Manguillot, Linear chaos, Springer, London 2011.

\bibitem{spaholfun}
K. H. Zhu, Spaces of holomorphic functions in the unit ball, Graduate Texts in Mathematics 226, Springer, New York, 2005.

\bibitem{Sh}
Shapiro J. H.: Composition operators and classical function theory, Universitext: Tracts in Mathematics, Springer-Verlag, New York, 1993.

\bibitem{PH}
Hurst P. R.: Relating composition operators on different weighted Hardy spaces. Arch. Math. (Basel), 68(6)1997, 503-513.

\bibitem{NNR}
Nordgren E. A., Radjavi H. and Rosenthal P.: Composition operators and the invariant subspace problem, C. R. Mat. Rep. Acad. Sci. Canada 6 (1984), 279-282.
\bibitem{NRW}
Nordgren E. A., Rosenthal P. and Wintrobe F. S.: Invertible composition operator on $H^p$, J. Func. Anal. 73 (1987), 324-344.

\bibitem{SH1}
Shapiro J. H.: The essential norm of a composition operator, Annals Math. 125 (1987), 375-404.

\bibitem{EM}
Gallardo-Guti\'{e}rrez E. A. and Montes-Rodr\'{\i}guez A.: The role of the spectrum in the cyclic behavior of composition operators. Mem. Amer. Math. Soc., 167(791):x+81,2004.

\bibitem{Zo}
Zorboska N.: Composition operators on weighted Hardy spaces, Thesis, Univ. Toronto, 1988.

\bibitem{zo3}
Zorboska N., Cyclic composition operators on smooth weighted Hardy spaces, Rocky
Mountain J. Math. 29 (1999), 725-740.

\bibitem{BS1}
P. S. Bourdon and J. H. Shapiro, Cyclic composition operators on $H^2$, Proc. Symp. Pure Math. 51 Part 2 (1990), 43-53.

\bibitem{BS2}
P. S. Bourdon and J. H. Shapiro, Cyclic phenomena for composition operators, Mem. Amer. Math. Soc. 596 (1997).

\bibitem{universal}
Grosse-Erdmann K.-G.: Universal families and hypercyclic operators. Bull. Amer. Math. Soc. (N.S.) 36 (1999), 345-381.

\bibitem{costakis}
Costakis G., Manoussos A., Parissis I.: Recurrent Linear Operators.Complex Anal. Oper. Theory 8 (2014), 1601-1643.

\bibitem{CM}
C. Cowen and B. MacCluer, Composition Operators on Spaces of Analytic Functions, CRC Press, 1995.

\bibitem{salas}
Salas H. N.: Supercyclicity and weighted shifts, Studia Math. 135 (1999), 55-74.
\bibitem{Ah}
Ahlfors L. V.: Complex Analysis, McGraw-Hill, New York, 1979.

\bibitem{rikka}
Gallardo-Guti\'{e}rrez E. A. and Schroderus R.: The spectra of linear fractional composition operators on the weighted Dirichlet spaces. Journal of Functional Analysis, Volume 271, Issue 3, 720-745, (2016).
\end{thebibliography}
\end{document}